\documentclass[11pt]{amsart}

\usepackage[T1]{fontenc}
\usepackage{lmodern}
\usepackage{amsmath,amssymb,amsthm,mathtools, fullpage}
\usepackage{booktabs}
\usepackage{enumitem}
\usepackage[hidelinks]{hyperref}
\usepackage[nameinlink,noabbrev]{cleveref}
\usepackage{microtype}

\newtheorem{theorem}{Theorem}[section]
\newtheorem{proposition}[theorem]{Proposition}
\newtheorem{lemma}[theorem]{Lemma}
\newtheorem{corollary}[theorem]{Corollary}

\theoremstyle{definition}
\newtheorem{definition}[theorem]{Definition}
\newtheorem{remark}[theorem]{Remark}

\newcommand{\kk}{\mathbb K}
\newcommand{\En}{\mathcal E_n}
\newcommand{\wt}{\operatorname{wt}}
\newcommand{\inop}{\operatorname{in}}
\newcommand{\Cut}{\operatorname{Cut}}
\newcommand{\esort}{\operatorname{esort}}
\numberwithin{equation}{section}

\title[Quadratic Gr\"obner bases for cycles and ring graphs]
{Quadratic Gr\"obner Bases for Cut Ideals of Cycles and\\ Ring Graphs}

\author{Hidefumi Ohsugi}
\address{Hidefumi Ohsugi,
Department of Mathematical Sciences,
School of Science,
Kwansei Gakuin University,
Sanda, Hyogo 669-1330, Japan}
\email{ohsugi@kwansei.ac.jp}

\subjclass[2020]{13P10, 13F65, 05C38}
\keywords{cut ideal, Gr\"obner basis, cycle, ring graph, parity polytope, toric ideal}
\date{}

\begin{document}

\begin{abstract}
Let $C_n$ be the cycle of length $n\ge3$ and let $I_{C_n}$ be its cut ideal.
We show that $I_{C_n}$ has a quadratic Gr\"obner basis with respect to an explicit weight order.
Since the defining configuration consists of $(0,1)$-vectors, the initial monomials of such a basis are automatically squarefree.
As the cut polytope of a cycle is the parity polytope, the result gives a regular unimodular flag triangulation of this classical polytope.
Together with the known tree case and the clique-sum theorem for cut ideals,
the cycle result also yields a quadratic Gr\"obner basis for the cut ideal
of every connected ring graph with at least one edge,
thereby supplying the missing cycle input and establishing the result for connected ring graphs.
\end{abstract}

\maketitle

\section{Introduction}

For a finite simple graph $G=(V,E)$ and a subset $S\subseteq V$, the \emph{cut} determined by $S$ is
\[
 \Cut_G(S):=\{uv\in E: |\{u,v\}\cap S|=1\}.
\]
Thus $\Cut_G(S)=\Cut_G(V\setminus S)$. Cut ideals were introduced by Sturmfels and Sullivant \cite{SturmfelsSullivant2008} in connection with algebraic statistics. 
Engstr\"om \cite{Engstrom2011} proved that the cut ideal $I_G$
of a graph $G$ is generated by quadratic binomials if and only if
$G$ is $K_4$-minor-free.
Shibata \cite[Corollary~2.4]{Shibata2015} proved that $I_G$
has a quadratic Gr\"obner basis if $G$ has no $K_4$- or $C_5$-minor.
In particular, the cut ideal of a cycle is generated by
(homogeneous) quadratic binomials, but Shibata's result does not
apply to $C_n$ for $n\ge5$, since $C_n$ has $C_5$ as a minor.

The cycle case also identifies the present problem with several objects studied in other areas. 
In fact, the cut vectors of $C_n$ are precisely the vectors in $\{0,1\}^n$ of even Hamming weight. Hence the cut polytope of $C_n$ is the parity polytope
\[
 P_n^{\mathrm{even}}=\operatorname{conv}\{x\in\{0,1\}^n:\textstyle\sum_{i=1}^n x_i\equiv0\pmod2\}.
\]
This is the codeword polytope of the single parity-check code and appears naturally in linear-programming decoding of binary error-correcting codes.
See, for example, \cite{BarmanLiuDraperRecht2013}. 
Under the affine change of coordinates $x_i\mapsto 1-2x_i$,
the same polytope is the half cube (or demihypercube).
See, for example,  \cite{Green2009}.
Moreover, up to a relabeling of variables, its toric ideal is the $\mathbb Z_2$ group-based phylogenetic ideal on a claw tree, as used below through the connection of Nagel and Petrovi\'c \cite[Lemma~3.1]{NagelPetrovic2009}. Thus even in the cycle case the same toric ideal sits naturally in polyhedral combinatorics, coding theory, and algebraic statistics.

A second reason to study the cycle case separately comes from ring graphs.
Following Gitler, Reyes, and Villarreal, a ring graph is a graph
in which each block that is neither a bridge nor a vertex can be
built from a cycle by successively attaching paths of length at least
two that meet the graph already constructed only at their endpoints,
which are adjacent
\cite[Definition~2.6]{GitlerReyesVillarreal2010}.
Equivalently, a connected ring graph can be obtained from trees and cycles by repeated clique sums along a vertex or an edge.
See \cite[Definition~6.1 and the discussion following it]{NagelPetrovic2009}. 
Ring graphs form a natural subclass of the $K_4$-minor-free graphs
and contain all outerplanar graphs
\cite[Theorem~2.13 and Proposition~2.17]{GitlerReyesVillarreal2010}.
Sturmfels and Sullivant showed that Gr\"obner bases of cut ideals can be combined under clique sums along cliques of size at most three \cite[Theorem~2.1]{SturmfelsSullivant2008}. In particular, the ``Lift and Quad construction'' preserves degree two. Since the tree case is known, the cycle is the only nontrivial building block needed for connected ring graphs.

Nagel and Petrovi\'c claimed that the cut ideal of every ring graph admits a squarefree quadratic Gr\"obner basis \cite[Theorem~6.2]{NagelPetrovic2009}. 
For connected ring graphs, their argument reduces the statement,
via the clique-sum construction, to the corresponding assertions
for trees and cycles.
The tree part is independent of the issue discussed below, but their cycle result \cite[Proposition~3.2]{NagelPetrovic2009} relies on results of Chifman and Petrovi\'c concerning the toric ideal $I_m$ of the $\mathbb Z_2$ group-based model on the claw tree $K_{1,m}$, which coincides with the cut ideal of the cycle of length $m+1$. 
Following Sakamoto's notation, let $Q_m$ denote the recursively
defined set of quadratic binomials considered by Chifman and Petrovi\'c.
They claimed that $Q_m$ generates $I_m$ and, moreover, that $Q_m$
is a lexicographic Gr\"obner basis of $I_m$
\cite[Propositions~2 and~3]{ChifmanPetrovic2007}.
Sakamoto showed that the first of these claims is false for $m\ge5$: the set $Q_m$ does not generate $I_m$ in general. 
Consequently, the proof of the second claim considers a proper subideal of $I_m$, and the Chifman--Petrovi\'c argument used in \cite[Proposition~3.2]{NagelPetrovic2009} does not establish the claimed cycle result for lengths at least $6$ \cite[Section~2]{Sakamoto2021}. 
Consequently, the cycle input needed in the proof of
\cite[Theorem~6.2]{NagelPetrovic2009} remains to be established.
Sakamoto additionally showed that the specific lexicographic order used by Chifman and Petrovi\'c does not yield a quadratic reduced Gr\"obner basis for $m\ge5$. This does not by itself give a counterexample to the existence statement. Sakamoto subsequently constructed lexicographic quadratic Gr\"obner bases for cycles of length at most $7$ and left the general existence problem open \cite{Sakamoto2021}. 
The present paper resolves that cycle problem by a different weighted order.
Applying this result in the argument of Nagel and Petrovi\'c
\cite[Theorem~6.2]{NagelPetrovic2009} then gives the corresponding
conclusion for connected ring graphs with at least one edge.

Let $C_n$ be a cycle of length $n\ge3$ having edges $e_1,\dots,e_n$, and let $[n]:=\{1,\dots,n\}$.
A subset of the edge set is a cut if and only if it has even cardinality. Thus the cuts of $C_n$ are naturally indexed by
\[
\En=\{A\subset[n]: |A|\equiv0\pmod2\}
\]
which has cardinality $2^{n-1}$.
Let $R_n=\kk[q_A:A\in\En]$
be a polynomial ring in $2^{n-1}$ variables over a field $\kk$.
The \textit{cut ideal} $I_{C_n}$ of $C_n$
is the kernel of the ring homomorphism
\begin{equation}\label{eq:cutmap}
 \varphi_n:R_n\longrightarrow
 \kk[s_1,\dots,s_n,t_1,\dots,t_n],
 \qquad
q_A\longmapsto
 \prod_{i\in A}s_i\prod_{i\notin A}t_i.
\end{equation}
It is known that $I_{C_n}$ is generated by homogeneous quadratic binomials.
(Note that $I_{C_3} = \{0\}$.)
Set
\[
N=\frac{n(n+1)}2,
 \qquad
M=2N^2+1,
 \qquad
w_i=n-i+1 \quad (1\le i\le n).
\]
For $1\le i<j\le n$, define
\begin{equation}\label{eq:cij}
c_{ij}=w_j(M-w_i) =(n-j+1)\bigl(M-(n-i+1)
\bigr) >0.
\end{equation}
Given $A \in \En$, let
\begin{equation}\label{eq:H}
H(A)=\sum_{\{i,j\}\subset A}c_{ij}, 
\end{equation}
where the empty sum is understood to be zero. Since $c_{ij}>0$, we have $H(A)>0$ for every nonempty $A\in\mathcal E_n$.
We prove the following.

\begin{theorem}\label{thm:main}
For every $n\ge 3$, the cut ideal $I_{C_n}$ has a quadratic Gr\"obner basis with respect to a weight order induced by $H$ in \eqref{eq:H}.
\end{theorem}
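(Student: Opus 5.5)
The plan is to use the standard fiber criterion for toric ideals. Let $\mathcal G$ be a set of quadratic binomials in $I_{C_n}$. Suppose that, in every fiber of $\varphi_n$ (the set of monomials of fixed degree $d$ with a fixed image), at most one monomial is not divisible by any $\inop_H(g)$, $g\in\mathcal G$. Then $\mathcal G$ is a Gr\"obner basis of $I_{C_n}$ with respect to the weight order, refined if necessary by a fixed term order. So the work is to describe the quadratic moves, identify their $H$-initial terms, and show that standard monomials are unique in each fiber.

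\textbf{Step 1: quadratic fibers.} A monomial $q_{A_1}\cdots q_{A_d}$ is determined by its image through the column counts $m_i=\#\{k: i\in A_k\}$. Hence $q_Aq_B$ and $q_Cq_D$ lie in the same fiber exactly when $A\cap B=C\cap D$ and $A\cup B=C\cup D$. In that case $C\setminus(A\cap B)$ and $D\setminus(A\cap B)$ split the symmetric difference $S=A\triangle B$ into two parts, each with parity fixed by $|A\cap B|$.

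Writing $H(A)=\sum_{i<j}c_{ij}x_ix_j$ in indicator coordinates, the terms involving $A\cap B$ or the complement of $A\cup B$ contribute the same amount to $H(C)+H(D)$ for every admissible pair $\{C,D\}$. The remaining part is $\sum c_{ij}$ over pairs $\{i,j\}\subset S$ lying in the same part of the split. Since $M$ is huge, $c_{ij}=w_jM-w_iw_j$ behaves like a lexicographic weight: first compare $\sum w_j$ over same-part pairs, counted by their larger index, and break ties by the correction $-\sum w_iw_j$.

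I would determine the unique $H$-optimal split of $S$ explicitly. My guess is the ``even sort'': list $S=\{s_1<\dots<s_r\}$ and assign its elements to the two parts in an alternating pattern compatible with the prescribed parities. I would call a pair $\{A,B\}$ sorted if it realizes this split. I would also check, using the bound $M=2N^2+1>2\bigl(\sum_{i<j} w_iw_j\bigr)$, that the optimum is strict. Then every nonsorted $q_Aq_B$ is the $H$-initial term of the binomial $q_Aq_B-q_Cq_D$ with $\{C,D\}$ sorted. Let $\mathcal G$ be the set of all these binomials.

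\textbf{Step 2: standard monomials.} A monomial is standard iff all its pairs of factors are sorted. I would define a canonical form $\esort(m_1,\dots,m_n;d)$, a $d$-tuple $A_1,\dots,A_d$ built column by column: for each $i$, place $i$ in a block of $m_i$ consecutive (cyclically shifted) rows, with the shift chosen so that every row ends with even cardinality. I would then prove two things:
\begin{enumerate}
\item[(a)] every pair of rows of $\esort$ is sorted;
\item[(b)] conversely, if all pairs of a multiset $\{A_1,\dots,A_d\}$ are sorted, then, after reordering, the sets form a chain-like pattern that forces them to equal $\esort$.
\end{enumerate}
Part (b) gives at most one standard monomial per fiber, which completes the proof.

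\textbf{Main obstacle.} I expect the main obstacle to be part (b): the uniqueness, and the correct definition of the even sort so that pairwise sortedness is transitive enough to pin down the whole multiset. The parity constraints make this subtler than Sturmfels' sortable sets. My first attempt is induction on $n$, deleting the last coordinate $n$, which has the smallest weight $w_n=1$ and therefore interacts with other coordinates only through the lowest-order part of $c_{ij}$. Showing that sortedness restricts well to $[n-1]$, with parities adjusted by whether $n\in A_k$, would reduce (b) to the case $n-1$. If the alternating guess in Step 1 turns out not to be the true $H$-optimum, I would recompute the optimal split from the lexicographic structure of $c_{ij}$ and adjust the definition of $\esort$ to match.
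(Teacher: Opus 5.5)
Your framework is correct. Quadratic fibers are determined by $A\cap B$ and $A\cup B$. The variable part of $H(C)+H(D)$ is the sum of $c_{ij}$ over same-part pairs in $A\triangle B$. The bound on $M$ makes the comparison lexicographic: first $\sum w_j$, then $-\sum w_iw_j$. The fiber criterion you quote is valid.

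However, the two steps that carry the theorem are only announced. In Step~1 the optimal split is a guess, and the parity correction is left unspecified. The correct rule is to alternate $d_1,d_2,\dots,d_{2m}$ and, when $m\not\equiv|A\cap B|\pmod 2$, to put the largest element $d_{2m}$ in the same part as $d_{2m-1}$. Proving strict optimality needs a usable form of your first-order term. The paper shows it is, up to a positive factor and fiber constants, $\sum_k\Delta_k^2$ with $\Delta_k=F_k(U)-F_k(V)$. Parity then forces $|\Delta_k|\ge1$ on every $[d_{2r-1},d_{2r})$ and, in the mismatch case, $|\Delta_k|\ge 2$ on $[d_{2m},n]$. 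This first-order term has in general several minimizers, one sign choice per interval. Only the second-order term, by maximizing $|\sum_k\Delta_k|$, selects the sorted pair.

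The genuine gap is (b), which is the whole content of the theorem. Through the fiber criterion you have committed to uniqueness of pairwise-sorted monomials in fibers of \emph{every} degree $d$. You measure this against a canonical form (``cyclically shifted blocks'') that is never actually defined. Your proposed induction on $n$ also leaves $\En$: deleting $n$ produces sets of prescribed odd parities, for which no sorting notion or optimality statement has been set up.

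The missing idea is that this generality is unnecessary. Since $\esort(A,A)=\{A,A\}$, every leading term $q_Aq_B$ is a squarefree quadratic. So in Buchberger's criterion the only nontrivial $S$-pairs have leading terms $q_Aq_B$ and $q_Aq_C$. Both terms of such an $S$-polynomial reduce, with strictly decreasing $H$-weight, to pairwise-sorted monomials in the same \emph{cubic} fiber, so uniqueness in cubic fibers suffices.

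The paper then proves cubic uniqueness via a structural lemma you do not have. An even-sorted pair has pointwise comparable prefix-count paths, with $F_k(U)-F_k(V)\in\{0,1\}$ if $|U|=|V|$. If $|U|=|V|+2$, the difference lies in $\{0,1,2\}$ and the value $2$ is absorbing. This totally orders the three factors, and the total cardinality fixes their size pattern. The gap vector $(X_k-Y_k,\,Y_k-Z_k)$ is then recovered deterministically from right to left, starting at the known final state, using $r_k=\chi_X(k)+\chi_Y(k)+\chi_Z(k)$. The absorbing rule eliminates the only ambiguous predecessors.

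In the size patterns $(2d+2,2d,2d)$ and $(2d+2,2d+2,2d)$ the prefix counts of the normal form eventually spread by $2$. So it is not a plain cyclic sort, and your shift rule would have to reproduce this. A full degree-$d$ version of (b) would give more, namely explicit normal forms in all degrees. As it stands, though, the proposal establishes neither the quadratic optimality nor any uniqueness of standard monomials.
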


Applying \Cref{thm:main} to the argument of Nagel and Petrovi\'c
\cite[Theorem~6.2]{NagelPetrovic2009}
gives the following ring-graph consequence.

\begin{corollary}\label{cor:ringgraph}
If $G$ is a connected ring graph with at least one edge, then its cut ideal $I_G$ admits a quadratic Gr\"obner basis.
\end{corollary}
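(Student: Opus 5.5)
The corollary follows from \Cref{thm:main} together with the clique-sum machinery of Sturmfels and Sullivant; I would not redo any of the Gr\"obner computation. Let $G$ be a connected ring graph with at least one edge. By \cite[Definition~6.1 and the discussion following it]{NagelPetrovic2009}, $G$ is obtained from finitely many trees and cycles $C_n$ ($n\ge3$) by successive clique sums along a vertex ($K_1$) or an edge ($K_2$). I would argue by induction on the number of clique-sum operations. The statement to carry through the induction is slightly stronger than the corollary: every graph arising in the construction has a quadratic Gr\"obner basis with respect to some term order that is compatible with the Lift and Quad construction.

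For the base case, a tree with at least one edge has a quadratic (indeed squarefree) Gr\"obner basis by the known tree case used in \cite{NagelPetrovic2009}. For a tree it is cleanest to present it itself as repeated clique sums of single edges $K_2$. The cut ideal of $K_2$ is zero, and the cut ideal of a tree is a toric ideal of a product of such pieces glued along vertices. A cycle $C_n$ has a quadratic Gr\"obner basis with respect to the weight order induced by $H$, by \Cref{thm:main}; for $n=3$ the ideal is zero, so the claim is trivial.

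For the inductive step, write $G=G_1\#_K G_2$ with $K\in\{K_1,K_2\}$, where each $G_i$ has fewer building blocks. By induction each $I_{G_i}$ has a quadratic Gr\"obner basis. Then \cite[Theorem~2.1]{SturmfelsSullivant2008} says that a Gr\"obner basis of $I_G$ is given by the lifts of the Gr\"obner bases of $I_{G_1}$ and $I_{G_2}$ together with the quadratic binomials from the ``Quad'' part, for clique sums along cliques of size at most three. Lifting preserves degree, so the result is again a quadratic Gr\"obner basis of $I_G$, with respect to the term order constructed in that theorem.

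The main obstacle is bookkeeping rather than new mathematics. One must check that the hypotheses of \cite[Theorem~2.1]{SturmfelsSullivant2008} are met at each stage, in particular the compatibility of the term orders on the factors with the identification of variables along the common clique. The cleanest approach is to note that \cite[Theorem~2.1]{SturmfelsSullivant2008} places no restriction on the term orders chosen for $I_{G_1}$ and $I_{G_2}$, and that the weight order from \Cref{thm:main} is a genuine term order once ties are broken. It also needs checking that the clique-sum theorem applies when a factor ideal is zero (single edges, $C_3$), which is immediate since the empty set is then a Gr\"obner basis. This reproduces the argument of \cite[Theorem~6.2]{NagelPetrovic2009}, with \Cref{thm:main} supplying the cycle input whose original justification was invalid.
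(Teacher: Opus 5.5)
Your proposal is correct and follows the paper's argument. You decompose the connected ring graph into trees and cycles by clique sums along a vertex or an edge, take the tree case and \Cref{thm:main} as base cases, and apply \cite[Theorem~2.1]{SturmfelsSullivant2008}, whose Lift and Quad construction preserves degree two. Your optional treatment of trees as vertex sums of copies of $K_2$ (whose cut ideal is zero) is a harmless variant of the paper's citation of \cite[Corollary~4.3]{NagelPetrovic2009} for trees with at least two edges.
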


In particular, the conclusion holds for every connected outerplanar graph with at least one edge.  The proof of Corollary~\ref{cor:ringgraph} is given at the end of \Cref{sec:GB}.

In polyhedral terms, \Cref{thm:main} has the following consequence.

\begin{corollary}\label{cor:triangulation}
The parity polytope $P_n^{\mathrm{even}}$ admits a regular unimodular flag triangulation with respect to the lattice generated by its vertices.
\end{corollary}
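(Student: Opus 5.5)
The plan is to deduce the statement from \Cref{thm:main} using Sturmfels' correspondence between initial ideals of toric ideals and regular triangulations. Let $\mathcal A_n\subset\mathbb Z^{2n}$ be the configuration of exponent vectors of the monomials $\varphi_n(q_A)$, for $A\in\En$. Each such vector is $(\chi_A,\mathbf 1-\chi_A)$, where $\chi_A$ is the indicator vector of $A$. In particular:
\begin{itemize}
\item every vector of $\mathcal A_n$ has coordinate sum $n$, so $I_{C_n}=\ker\varphi_n$ is homogeneous and $\mathcal A_n$ is a point configuration on an affine hyperplane;
\item the projection $(x,y)\mapsto x$ is an affine-lattice isomorphism from $\operatorname{conv}\mathcal A_n$ onto $P_n^{\mathrm{even}}$, and it carries the lattice spanned by $\mathcal A_n$ onto the affine lattice generated by the vertices of $P_n^{\mathrm{even}}$.
\end{itemize}
Hence a regular unimodular flag triangulation of $\mathcal A_n$, with respect to the lattice it generates, is the same thing as the one asserted for $P_n^{\mathrm{even}}$. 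Since all points of $\mathcal A_n$ are $0/1$ vectors, no point lies in the convex hull of the others. So every point is a vertex, and the triangulation uses all of them.

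Next, fix the order. By \Cref{thm:main} there is a weight order $<$, namely the weight $H$ refined by some term order, under which $I_{C_n}$ has a Gr\"obner basis $\mathcal G$ of quadratic binomials $q_Aq_B-q_Cq_D$. I will check that every initial monomial is squarefree. Suppose some initial monomial were $q_A^2$, coming from $q_A^2-q_Cq_D\in I_{C_n}$. Then $2\chi_A=\chi_C+\chi_D$ with $\chi_C,\chi_D\in\{0,1\}^n$, which forces $C=D=A$ and the binomial vanishes. Thus $\inop_<(I_{C_n})$ is generated by squarefree quadratic monomials.

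Now apply Sturmfels' theorem (Gr\"obner Bases and Convex Polytopes, Thm.~8.3 and Cor.~8.9):
\begin{itemize}
\item for any term order $<$, $\sqrt{\inop_<(I_{\mathcal A})}$ is the Stanley--Reisner ideal of the regular triangulation $\Delta_<$ of $\mathcal A$;
\item $\Delta_<$ is unimodular with respect to $\mathbb Z\mathcal A$ exactly when $\inop_<(I_{\mathcal A})$ is squarefree.
\end{itemize}
Regularity comes from choosing a generic height vector $\omega$ representing $<$ on $I_{C_n}$, for instance $\omega=\varepsilon^{-1}H+\omega'$ with $\omega'$ representing the tie-breaking order and $\varepsilon$ small. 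Then $\inop_\omega(I_{C_n})=\inop_<(I_{C_n})$, and $\Delta_<$ is the regular subdivision induced by the heights $\omega$. Because the initial ideal is squarefree, it equals the Stanley--Reisner ideal of $\Delta_<$, so $\Delta_<$ is unimodular. Its minimal nonfaces correspond to the minimal generators of this ideal, which are all quadratic. So every minimal nonface has two elements, which is precisely the statement that $\Delta_<$ is flag.

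I expect no serious obstacle; the work is bookkeeping. The points needing care are:
\begin{itemize}
\item the identification of the lattices, so that ``unimodular'' refers to the lattice generated by the vertices of $P_n^{\mathrm{even}}$;
\item the fact that a weight order whose initial ideal is monomial yields a regular triangulation even when $H$ itself has ties;
\item the remark that quadratic generators in a squarefree monomial ideal give the flag property.
\end{itemize}
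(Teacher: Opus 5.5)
Your proposal is correct and follows the same route as the paper. You identify the toric configuration with the vertex set of $P_n^{\mathrm{even}}$ via $x\mapsto(x,\mathbf 1-x)$, observe that the quadratic initial ideal is squarefree, and apply Sturmfels' correspondence (Chapter~8) to get a regular, unimodular, flag triangulation. The only differences are minor: you prove squarefreeness directly from the $0/1$ structure, where the paper cites Proposition~10 of Matsuda--Ohsugi--Shibata, and the leading terms $q_Aq_B$ with $A\ne B$ of $G_n$ already exhibit it. Your worry about ties in $H$ is also moot. Every element of $G_n$ has a strictly $H$-heavier leading term, so $\inop_H(I_{C_n})=\inop_\prec(I_{C_n})$ and $H$ itself already serves as a generic height function.
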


Indeed, the exponent vector of $\varphi_n(q_A)$ is
$ (\chi_A,\mathbf 1-\chi_A)\in\{0,1\}^{2n}$,
where $\mathbf 1=(1,\dots,1)$. Hence the configuration defining $I_{C_n}$ is the image of the vertex configuration $\{\chi_A:A\in\En\}$ of $P_n^{\mathrm{even}}$ under the affine lattice embedding
$x\longmapsto (x,\mathbf 1-x)$.
Thus $I_{C_n}$ is the toric ideal of a configuration affinely lattice-isomorphic to the vertex configuration of $P_n^{\mathrm{even}}$. 
By \Cref{thm:main} and
\cite[Proposition~10]{MatsudaOhsugiShibata2019},
the corresponding initial ideal is generated by \textit{squarefree}
quadratic monomials.
It therefore gives a regular unimodular
flag triangulation; see \cite[Chapter~8]{Sturmfels1996}.

The proof of \Cref{thm:main} is based on a parity-adjusted sorting operation on even subsets. The weight in \eqref{eq:H} is chosen so that, among quadratic monomials with the same image under $\varphi_n$, the even-sorted monomial is the unique one of minimum weight. To prove that the resulting quadratic relations form a Gr\"obner basis, it then suffices to understand degree three. We show that, among cubic monomials with a fixed image under $\varphi_n$, there is a unique monomial whose three quadratic divisors are even-sorted. Consequently, the two monomials of every nontrivial $S$-polynomial reduce to the same normal form, and Buchberger's criterion \cite[Theorem~1.29]{HerzogHibiOhsugi2018} applies.

The paper is organized as follows. 
Section~2 introduces the parity-adjusted sorting operation on pairs of even subsets and develops the corresponding structure of quadratic fibers. 
Section~3 constructs the explicit weight order and proves that the
even-sorted monomial is the unique minimizer in every quadratic fiber.
Section~4 establishes the uniqueness of the pairwise even-sorted monomial in each cubic fiber. 
Finally, Section~5 proves \Cref{thm:main} using Buchberger's criterion and gives the consequences for ring graphs and Koszulness.

\subsection*{Acknowledgment}
This work was supported by 
JSPS KAKENHI 24K00534.

\section{Parity-adjusted sorting of two even subsets}

Our construction is a parity-adjusted version of the usual sorting
operation for monomials.
See, for example,
\cite[Theorem~14.2]{Sturmfels1996}.
When the usual sorting produces two even subsets, our operation agrees
with it; otherwise, the largest element of the symmetric difference is
shifted from one subset to the other in order to preserve even cardinality.

Given $A\subset[n]$, let $\chi_A\in\{0,1\}^n$ denote its
characteristic vector, and write $\chi_A(i)$ for its $i$th coordinate.
Thus
\[
 \chi_A(i)=
 \begin{cases}
 1,& i\in A,\\
 0,& i\notin A
 \end{cases}
 \qquad(1\le i\le n).
\]
Given a monomial $m=q_{A_1}\cdots q_{A_d} \in R_n$ of degree $d$, let
\[
\rho(m)=\chi_{A_1}+\cdots+\chi_{A_d}
\in\{0,1,\dots,d\}^n.
\]
For $d\ge1$ and $u\in\{0,1,\dots,d\}^n$, define
\[
\mathcal F_u^{(d)}
=
\left\{
q_{A_1}\cdots q_{A_d} \in R_n
:
A_1,\ldots,A_d\in\En,\ 
\rho(q_{A_1}\cdots q_{A_d})=u
\right\}.
\]
If $\mathcal F_u^{(d)}\neq\emptyset$, then we call
$\mathcal F_u^{(d)}$ the \emph{degree-$d$ fiber of $\varphi_n$ over $u$}.
A degree-$2$ fiber (resp.~degree-$3$ fiber) is called a
\emph{quadratic fiber} (resp.~\emph{cubic fiber}).
If $h:\En\to\mathbb R$ is a weight function, the $h$-weight of a monomial $m=q_{A_1}\cdots q_{A_d}$ is 
\[
\wt_h(m):=h(A_1)+\cdots+h(A_d).
\]
For a nonempty quadratic fiber $\mathcal F_u^{(2)}$, a monomial $m\in\mathcal F_u^{(2)}$ is called a \emph{quadratic minimizer} (with respect to $h$) if
\[
 \wt_h(m)
=
 \min\{\wt_h(m'):m'\in\mathcal F_u^{(2)}\}.
\]
If there is exactly one such monomial, it is called the \emph{unique quadratic minimizer} of $\mathcal F_u^{(2)}$.
For monomials $m,m'\in R_n$ of the same degree $d$, we have
\[
 \varphi_n(m)=\varphi_n(m')
 \quad\Longleftrightarrow\quad
 \rho(m)=\rho(m')
\]
since the exponents of $s_i$ and $t_i$ in $\varphi_n(m)$ are
$(\rho(m))_i$ and $d-(\rho(m))_i$, respectively.
Thus
$\mathcal F_u^{(d)}$ is precisely a fiber of
$\varphi_n$ restricted to monomials of degree $d$. 
In particular, a
quadratic fiber is determined by $\chi_A+\chi_B$, and a cubic fiber is determined by
$\chi_A+\chi_B+\chi_C$.

\begin{definition} \label{def:evensort}
Fix $A,B\in\En$. 
Let
\[
I=A\cap B,
 \qquad
A\triangle B=\{d_1<d_2<\cdots<d_{2m}\},
 \qquad
p\in\{0,1\} \mbox{ with } p\equiv |I|\pmod2.
\]
Since $A$ and $B$ are both even,
the symmetric difference $A \triangle B$ has even (possibly zero) cardinality.
\begin{enumerate}
    \item 
If $m\equiv p\pmod2$, define
\begin{align*}
 A^\sharp&=I\cup\{d_{2r-1}:1\le r\le m\},\\
 B^\sharp&=I\cup\{d_{2r}:1\le r\le m\}.
\end{align*}
\item
If $m\not\equiv p\pmod2$ (which necessarily implies $m\ge1$), define
\begin{align*}
 A^\sharp&=I\cup\{d_{2r-1}:1\le r\le m\}\cup\{d_{2m}\},\\
 B^\sharp&=I\cup\{d_{2r}:1\le r\le m-1\}.
\end{align*}
\end{enumerate}
Here a set indexed by an empty range is understood to be empty. We call the unordered pair $\esort(A,B)=\{A^\sharp,B^\sharp\}$ the \emph{even-sorted pair} associated with $A,B$. We say that $\{A,B\}$ is even-sorted if $\{A,B\}=\esort(A,B)$.
\end{definition}

\begin{lemma}\label{lem:sortingfiber}
Let $A,B\in\En$, and let
$\{A^\sharp,B^\sharp\}=\esort(A,B)$. Then
$A^\sharp,B^\sharp\in\En$, and 
\[
 \chi_A+\chi_B=\chi_{A^\sharp}+\chi_{B^\sharp}.
\]
Consequently, $q_Aq_B$ and
$q_{A^\sharp}q_{B^\sharp}$ belong to the same quadratic fiber.
In particular,
\[
 q_Aq_B-q_{A^\sharp}q_{B^\sharp}\in I_{C_n}.
\]
\end{lemma}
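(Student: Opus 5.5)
The plan is to verify both assertions directly from \Cref{def:evensort}, treating the two cases separately. Write $I=A\cap B$ and $D=A\triangle B=\{d_1<\cdots<d_{2m}\}$. The basic observation is that
\[
 \chi_A+\chi_B=2\chi_I+\chi_D,
\]
because every element of $A\cup B$ lies either in $I$ (counted twice) or in $D$ (counted once).

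For the sum identity, note that in both cases $A^\sharp$ and $B^\sharp$ each contain $I$. Their parts outside $I$ are disjoint, and the union of these parts is exactly $D$.
\begin{itemize}
\item In case (1), the odd-indexed elements $d_{2r-1}$ go to $A^\sharp$ and the even-indexed elements $d_{2r}$ go to $B^\sharp$.
\item In case (2), the same split is used except that $d_{2m}$ is moved from $B^\sharp$ to $A^\sharp$.
\end{itemize}
Since $I\cap D=\emptyset$, the unions in the definition are disjoint unions, and so
\[
 \chi_{A^\sharp}+\chi_{B^\sharp}=2\chi_I+\chi_D=\chi_A+\chi_B.
\]

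For the parity claim, count cardinalities. Since $|A|$ and $|B|$ are even and $|A|+|B|=2|I|+2m$, we get $|D|=2m$, as already noted in the definition.
\begin{itemize}
\item In case (1), $|A^\sharp|=|B^\sharp|=|I|+m\equiv p+m\equiv 0\pmod 2$, using $m\equiv p$.
\item In case (2), $m\ge1$ because $m\not\equiv p$ forces $m\neq0$ when $p=0$. Then $|A^\sharp|=|I|+m+1$ and $|B^\sharp|=|I|+m-1$. Both are even because $|I|+m\equiv p+m\equiv1\pmod 2$.
\end{itemize}
Hence $A^\sharp,B^\sharp\in\En$.

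The remaining statements are formal. Since $\rho(q_Aq_B)=\rho(q_{A^\sharp}q_{B^\sharp})$, the two monomials lie in the same quadratic fiber. By the equivalence $\varphi_n(m)=\varphi_n(m')\iff\rho(m)=\rho(m')$ for monomials of equal degree, stated before the definition, they have the same image under $\varphi_n$. Their difference therefore lies in $\ker\varphi_n=I_{C_n}$.

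There is no real obstacle here. The only points needing care are the disjointness of $I$ and $D$, and checking $m\ge1$ in case (2) so that $d_{2m}$ exists.
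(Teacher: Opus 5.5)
Your proof is correct and follows the paper's argument essentially verbatim: a parity count in each case of Definition~\ref{def:evensort}, then $\chi_X+\chi_Y=2\chi_{X\cap Y}+\chi_{X\triangle Y}$ with $A^\sharp\cap B^\sharp=I$ and $A^\sharp\triangle B^\sharp=A\triangle B$, then the fiber/kernel conclusion. One small fix is needed: your justification of $m\ge1$ in case (2) covers only $p=0$. When $p=1$ you also need that $m=0$ would force $A=B=I$, so $|I|=|A|$ would be even and $p=0$, a contradiction.
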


\begin{proof}
Work with the same notation as in Definition~\ref{def:evensort}.
\begin{itemize}
    \item 
Suppose that $m\equiv p\pmod2$. 
Then
\[
 |A^\sharp|
 =
 |B^\sharp|
 =
 |I|+m
 \equiv
 p+p
 \equiv0
 \pmod2.
\]
Thus $A^\sharp,B^\sharp\in\En$.
\item 
Suppose that $m\not\equiv p\pmod2$, i.e., $m\equiv p+1\pmod2$.
Then
\[
 \begin{aligned}
 |A^\sharp|
 &=|I|+m+1\equiv p+(p+1)+1\equiv0\pmod2,\\
 |B^\sharp|
 &=|I|+m-1\equiv p+(p+1)-1\equiv0\pmod2.
 \end{aligned}
\]
Thus $A^\sharp,B^\sharp\in\En$.
\end{itemize}
In either case, the construction gives
\[
 A^\sharp\cap B^\sharp=I=A\cap B
\]
and
\[
 A^\sharp\triangle B^\sharp
 =
 A\triangle B.
\]
For arbitrary subsets $X,Y\subset[n]$, one has
\[
 \chi_X+\chi_Y
 =
 2\chi_{X\cap Y}+\chi_{X\triangle Y}.
\]
Thus we have
\[
 \chi_{A^\sharp}+\chi_{B^\sharp}
 =
 2\chi_{A^\sharp\cap B^\sharp}
 +\chi_{A^\sharp\triangle B^\sharp}
 =
 2\chi_{A\cap B}
 +\chi_{A\triangle B}
 =
 \chi_A+\chi_B.
\]
Equivalently,
\[
 \rho(q_Aq_B)
 =
 \chi_A+\chi_B
 =
 \chi_{A^\sharp}+\chi_{B^\sharp}
 =
 \rho(q_{A^\sharp}q_{B^\sharp}).
\]
Hence $q_Aq_B$ and $q_{A^\sharp}q_{B^\sharp}$ belong to the
same quadratic fiber.  By the description of fibers above,
\[
 \varphi_n(q_Aq_B)
 =
 \varphi_n(q_{A^\sharp}q_{B^\sharp}),
\]
and therefore
\[
 q_Aq_B-q_{A^\sharp}q_{B^\sharp}
 \in\ker\varphi_n=I_{C_n},
\]
as desired.
\end{proof}

Given \(U\subset[n]\), define
$$
F_k(U)=|U\cap[k]|
\qquad (0\le k\le n),
$$
where \(F_0(U)=0\). We regard the sequence
$$
F_0(U),F_1(U),\ldots,F_n(U)
$$
as the \emph{prefix-count path} of \(U\). If \(U,V\in\En\) form an even-sorted pair, then one of their prefix-count paths weakly dominates the other.

\begin{lemma} \label{lem:pairshape}
Let $U,V\in\En$ be even-sorted. 
After possibly interchanging $U$ and $V$, one has
\[
F_k(U)\ge F_k(V)\qquad(1\le k\le n).
\]
Moreover,
$\delta_k:=F_k(U)-F_k(V)$ satisfies exactly one of the following.
\begin{enumerate}[label=\textup{(\roman*)}]
\item $|U|=|V|$ and $\delta_k\in\{0,1\}$ for all $k$.
\item $|U|=|V|+2$ and $\delta_k\in\{0,1,2\}$ for all $k$. In addition, if
$\delta_k=2$ for some $k$, then $\delta_\ell=2$ for every $\ell\ge k$.
\end{enumerate}
\end{lemma}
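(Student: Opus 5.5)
The plan is to use only the explicit shape of an even-sorted pair from Definition~\ref{def:evensort}. Since $\{U,V\}$ is even-sorted, $\{U,V\}=\esort(U,V)$. So, with $I=U\cap V$ and $U\triangle V=\{d_1<\dots<d_{2m}\}$, the pair $\{U,V\}$ is either the pair of case (1) or the pair of case (2) of that definition, built from $I$ and the $d_r$. This holds because $\esort$ depends only on $I$ and on the symmetric difference, which it preserves. We label $U$ as the set containing $I\cup\{d_1,d_3,\dots\}$, i.e. the set that receives $d_1$; if $m=0$ then $U=V$ and everything is trivial.

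First, the common part $I$ contributes equally to both prefix counts, so $\delta_k=F_k(U)-F_k(V)$ depends only on how the elements $d_r$ with $d_r\le k$ are distributed. Let $r(k)=|\{r: d_r\le k\}|$.

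In case (1), $U$ gets the odd-indexed $d$'s and $V$ the even-indexed ones. Hence $\delta_k=\lceil r(k)/2\rceil-\lfloor r(k)/2\rfloor\in\{0,1\}$, and $|U|=|V|=|I|+m$. This is alternative (i).

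In case (2), $U$ additionally receives $d_{2m}$ and $V$ loses it. For $k<d_{2m}$ we have $r(k)\le 2m-1$, and the count is as in case (1), so $\delta_k\in\{0,1\}$. For $k\ge d_{2m}$ all $d$'s are counted: $U$ has $m+1$ of them and $V$ has $m-1$, so $\delta_k=2$. Moreover $\delta_k<2$ for $k<d_{2m}$. Thus the set of $k$ with $\delta_k=2$ is exactly the final interval $[d_{2m},n]$, which gives the monotonicity clause, and $|U|=|V|+2$. This is alternative (ii).

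In both cases $\delta_k\ge0$, which gives the domination $F_k(U)\ge F_k(V)$. Exactly one alternative holds, since $|U|-|V|\in\{0,2\}$ distinguishes them. The only point needing care, and the one I expect to be the main obstacle (mild), is justifying that an even-sorted pair really has the case (1) or case (2) form with respect to its own intersection and symmetric difference. That is, the ``after interchanging'' labeling must be consistent, with $U$ receiving $d_1$. This follows from the identities $A^\sharp\cap B^\sharp=A\cap B$ and $A^\sharp\triangle B^\sharp=A\triangle B$ established in the proof of Lemma~\ref{lem:sortingfiber}.
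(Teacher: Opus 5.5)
Your proof is correct and follows the paper's argument. You read off the case (1) or case (2) shape from Definition~\ref{def:evensort} with $d_1\in U$, note that $I$ cancels in $\delta_k$, and get alternating $0/1$ differences in case (1) and a jump to a constant $2$ at $d_{2m}$ in case (2). Your explicit count via $r(k)$ only makes the paper's terse description precise; the identities from Lemma~\ref{lem:sortingfiber} are not actually needed, since $\esort(U,V)$ is by definition built from $U\cap V$ and $U\triangle V$.
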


\begin{proof}
If $U=V$, then $\delta_k=0$ for every $k$, and the assertion is immediate. Hence we may assume that $U\ne V$. 
Let $U\triangle V=\{d_1<\cdots<d_{2m}\}$.
After interchanging $U$ and $V$ if necessary, we may assume that $d_1$ belongs to $U$.

In the case of Definition~\ref{def:evensort} (1), the elements of the symmetric difference are assigned alternately. The difference of the two prefix paths is $1$ on each interval $d_{2r-1}\le k<d_{2r}$ and $0$ elsewhere. Hence (i) holds.

In the case of Definition~\ref{def:evensort} (2), the same alternating behavior occurs until the last two elements of the symmetric difference. The element $d_{2m-1}$ is assigned to $U$ and then $d_{2m}$ is also assigned to $U$. Thus the path difference becomes $2$ at $d_{2m}$ and remains $2$ thereafter. This gives (ii).
\end{proof}

\section{A weight order for even sorting}

We first introduce an auxiliary weight $\Omega$ for which the minimization argument is more direct.
After proving that even sorting is uniquely minimal for $\Omega$, we show that $\Omega$ and the explicit pairwise weight $H$ from \Cref{thm:main} induce the same comparisons on every fiber. 
Let
\[
N=\sum_{k=1}^n k=\frac{n(n+1)}2,
 \qquad
M=2N^2+1.
\]
For $U\subset[n]$, set
\[
P(U)=\sum_{k=1}^n F_k(U)^2,
 \qquad
L(U)=\sum_{k=1}^nF_k(U),
 \qquad
 \Omega(U)=M \cdot P(U)-L(U)^2.
\]

\begin{lemma}\label{lem:quadraticminimum}
In every quadratic fiber, the even-sorted monomial uniquely minimizes $\Omega(U)+\Omega(V)$.
\end{lemma}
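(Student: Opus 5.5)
The plan is to fix a quadratic fiber and describe all of its monomials by a signed lattice path. Then the $M$-term of $\Omega$ is minimized first, and the $L$-term is used only to break ties.

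\emph{Setting up the fiber.} Fix $u=\chi_A+\chi_B$. Every monomial $q_Uq_V$ in $\mathcal F^{(2)}_u$ has $U\cap V=I:=\{i:u_i=2\}$ and $U\triangle V=D:=\{i:u_i=1\}=\{d_1<\dots<d_{2m}\}$. Conversely, every splitting of $D$ into two parts with $|U|,|V|$ even gives a monomial in the fiber. If $a$ is the number of elements of $D$ placed in $U$, the evenness condition is $a\equiv |I|\equiv p \pmod 2$. Put $S_k=F_k(U)+F_k(V)$, which depends only on $u$, and $\delta_k=F_k(U)-F_k(V)$. Then $\delta$ is a path starting at $\delta_0=0$. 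It moves by $\pm1$ exactly at $k\in D$ and is constant otherwise, and $\delta_n=2a-2m$. We have
\[
P(U)+P(V)=\tfrac12\sum_k\bigl(S_k^2+\delta_k^2\bigr),\qquad
L(U)^2+L(V)^2=\tfrac12\Bigl(T^2+\bigl(\textstyle\sum_k\delta_k\bigr)^2\Bigr),
\]
where $T=L(U)+L(V)$ is also fixed by the fiber. Hence
\[
\Omega(U)+\Omega(V)=\mathrm{const}+\tfrac M2\sum_k\delta_k^2-\tfrac12\Bigl(\sum_k\delta_k\Bigr)^2 .
\]

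\emph{Lexicographic reduction.} The quantity $\sum_k\delta_k^2$ is an integer, and $\bigl(\sum_k\delta_k\bigr)^2$ lies in $[0,4N^2]$. Changing $\sum\delta_k^2$ by at least $1$ changes the first term by at least $M/2=N^2+\tfrac12$. The second term varies by at most $2N^2$ in total, so this half-factor must be checked carefully. I would check it by bounding $|L(U)-L(V)|\le N$ directly, which gives $\tfrac12(\sum\delta_k)^2\le N^2/2$. This strictly beats $M/2$, and this is exactly what the choice $M=2N^2+1$ is designed for. So it suffices to show two things. First, the paths minimizing $\sum\delta_k^2$ are exactly those whose absolute values $|\delta_k|$ match the even-sorted pattern. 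Second, among these, $|\sum\delta_k|$ is maximized only by the even-sorted pair, taken up to the swap $U\leftrightarrow V$, which reverses the sign of $\delta$ and gives the same unordered monomial.

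\emph{Minimizing $\sum\delta_k^2$.} On the interval $[d_j,d_{j+1})$ the path is constant, and its value there has the parity of $j$. So $|\delta_k|\ge1$ on odd intervals. In case $m\equiv p$ this already gives the lower bound $\sum_r (d_{2r}-d_{2r-1})$. Equality holds iff $\delta$ vanishes on all even intervals and has absolute value $1$ on odd ones. In that case the final value $\delta_n=0$ is automatically allowed, so the bound is attained. In case $m\not\equiv p$ we have $|\delta_n|=|2a-2m|\ge2$, so $|\delta_k|\ge2$ on $[d_{2m},n]$. The lower bound becomes the odd-interval sum plus $4(n-d_{2m}+1)$. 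Equality forces $\delta=0$ on even intervals before $d_{2m}$, $|\delta|=1$ on odd intervals, and a last pair of steps in the same direction. In both cases the minimizers are therefore determined by a choice of sign $\varepsilon_r=\pm1$ on each odd interval, with the final $\pm2$ block forced to share the sign of the last odd interval.

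\emph{Tie-break.} For the minimizers, $\sum_k\delta_k=\sum_r\varepsilon_r\ell_r$, where each weight $\ell_r>0$; the last block's weight also includes the contribution of the $\pm2$ part in case (2). This sum has maximal absolute value iff all $\varepsilon_r$ are equal. With all signs equal, the assignment is exactly: $d_{2r-1}$ goes to one set and $d_{2r}$ to the other, except that in case (2) $d_{2m}$ joins $d_{2m-1}$. This is Definition~\ref{def:evensort}, consistent with Lemma~\ref{lem:pairshape}. The main obstacle I expect is the bookkeeping in case (2): showing that the equality conditions really force the $\pm2$ jump to occur only at the last step, rather than allowing an earlier excursion to $\pm2$ followed by a return.
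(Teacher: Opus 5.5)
Your proposal is correct and follows the paper's proof essentially step by step: the same decomposition $\Omega(U)+\Omega(V)=\text{const}+\tfrac M2\sum_k\delta_k^2-\tfrac12\bigl(\sum_k\delta_k\bigr)^2$, the same comparison of $|\sum_k\delta_k|\le N$ against $M=2N^2+1$, the same parity lower bounds (including $|\delta_n|\ge2$ in the second case), and the same tie-break by signs and the triangle inequality. The obstacle you flag at the end is already settled by your own equality analysis: tightness of the pointwise bounds forces $\delta_k=0$ on the even intervals before $d_{2m}$, which rules out any earlier excursion to $\pm2$.
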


\begin{proof}
Fix a quadratic fiber
\[
 \mathcal{F}^{(2)}_s
=
 \left\{
q_Uq_V : U,V\in\mathcal{E}_n,\;
 \chi_U+\chi_V=s
 \right\},
 \qquad s\in\{0,1,2\}^n.
\]
Set $I_s:=\{i\in[n]:s_i=2\}$ and $D_s:=\{i\in[n]:s_i=1\}$. For every $q_Uq_V\in\mathcal{F}^{(2)}_s$, one has $U\cap V=I_s$ and $U\triangle V=D_s$. 
Let $D_s=\{d_1<d_2<\cdots<d_{2m}\}$.
If $D_s=\emptyset$, then $U=V=I_s$ for every $q_Uq_V\in\mathcal F_s^{(2)}$. Hence the fiber consists of the single monomial $q_{I_s}^2$, and the assertion is immediate. We may therefore assume that $D_s\ne\emptyset$, or equivalently, that $m\ge 1$.
Let $p\in\{0,1\}$ be determined by $p\equiv |I_s|\pmod 2$, and let $q_{U^\sharp}q_{V^\sharp}$ be the even-sorted monomial determined by $I_s$ and $D_s$ as in Definition~\ref{def:evensort}. 
This monomial depends only on the fiber.

For $1\le k\le n$, let
\begin{align}   \label{eq:s and delta}
S_k:=F_k(U)+F_k(V),
\qquad
\Delta_k:=F_k(U)-F_k(V).
\end{align}
Since $\chi_U+\chi_V=s$, we have
\[
S_k
=
F_k(U)+F_k(V)
=
\sum_{i=1}^k\bigl(\chi_U(i)+\chi_V(i)\bigr)
=
\sum_{i=1}^k s_i.
\]
Thus $S_k$ depends only on the fiber
$\mathcal F_s^{(2)}$.
From \eqref{eq:s and delta},
it follows that
\[
F_k(U)^2+F_k(V)^2
=
\frac12\bigl(S_k^2+\Delta_k^2\bigr).
\]
Hence
\[
P(U)+P(V)
=
\frac12\sum_{k=1}^n
\bigl(S_k^2+\Delta_k^2\bigr).
\]
Similarly,
\[
L(U)+L(V)
=
\sum_{k=1}^n S_k,
\qquad
L(U)-L(V)
=
\sum_{k=1}^n \Delta_k,
\]
and hence
\[
L(U)^2+L(V)^2
=
\frac12\left(
\left(\sum_{k=1}^n S_k\right)^2
+
\left(\sum_{k=1}^n\Delta_k\right)^2
\right).
\]
Consequently,
\[
 \Omega(U)+\Omega(V)
= C_s+\frac{M}{2}Q(U,V)-\frac12R(U,V)^2,
\]
where
\[
 C_s:=\frac M2\sum_{k=1}^nS_k^2
 -\frac12\left(\sum_{k=1}^nS_k\right)^2,
\qquad
Q(U,V):=\sum_{k=1}^n\Delta_k^2,
 \qquad
R(U,V):=\sum_{k=1}^n\Delta_k.
\]
Note that $C_s$ depends only on the fiber.
Since $|\Delta_k|\le k$, we have
\[
 |R(U,V)|\le\sum_{k=1}^n|\Delta_k|
 \le\sum_{k=1}^n k=N.
\]

Let $ q_Uq_V,\ q_{U'}q_{V'}\in\mathcal{F}^{(2)}_s$.
\begin{itemize}
    \item 
If $Q(U,V)<Q(U',V')$, then 
$Q(U',V') - Q(U,V) \ge 1$
and hence
we have
\begin{align*}
& \quad ( \Omega(U')+\Omega(V'))-( \Omega(U)+\Omega(V))\\
& =
 \frac{M}{2}\bigl(Q(U',V')-Q(U,V)\bigr)
-
 \frac12\bigl(R(U',V')^2-R(U,V)^2\bigr)\\
& \ge
 \frac{M}{2}-\frac{N^2}{2}>0
\end{align*}
since $|R(U,V)|, |R(U',V')| \le N$.
Thus every minimizer of \(\Omega(U)+\Omega(V)\) must first minimize
\(Q(U,V)\).
\item
If $Q(U,V)=Q(U',V')$, then 
\[
( \Omega(U')+\Omega(V'))-( \Omega(U)+\Omega(V))
=-
 \frac12\bigl(R(U',V')^2-R(U,V)^2\bigr).
\]
Thus among the minimizers of \(Q\), minimizing
\(\Omega(U)+\Omega(V)\) is equivalent to maximizing
$|R(U,V)|$.
\end{itemize}

We now determine the minimizers of \(Q\). 
Recall that $D_s=U\triangle V=\{d_1<\cdots<d_{2m}\}$.
Note that $\Delta_k$ can change only when $k\in D_s$.
Hence $\Delta_k$ is constant between two consecutive elements of $D_s$.
Let $\{U^\sharp,V^\sharp\}=\esort(U,V)$ be the even-sorted pair determined by the fiber. 
We may assume that \(d_1\in U^\sharp\).
Let $ \Delta_k^\sharp
:= F_k(U^\sharp)-F_k(V^\sharp)$.

\medskip

\noindent
{\bf Case 1} (\(m\equiv p\pmod2\)). Then $|U^\sharp|=|V^\sharp|$
and $\Delta_k^\sharp$
is given explicitly by
\[
 \Delta_k^\sharp=
 \begin{cases}
1, & d_{2r-1}\le k<d_{2r}
    \text{ for some }r,\\
0, & \text{otherwise}.
 \end{cases}
\]
In particular, $(\Delta_k^\sharp)^2=\Delta_k^\sharp $.
Let $\ell_r:=d_{2r}-d_{2r-1}>0$. 
It then follows that
\[
Q(U^\sharp,V^\sharp) =
R(U^\sharp,V^\sharp) =
 \sum_{r=1}^m\ell_r.
\]

Let $q_Uq_V\in\mathcal{F}^{(2)}_s$.
If $d_{2r-1}\le k<d_{2r}$, then exactly $2r-1$ elements of
$D_s=U\triangle V$ belong to $[k]$. 
Hence
\[
\Delta_k
=
|(U\setminus V)\cap[k]|
-
|(V\setminus U)\cap[k]|
\equiv
|(U\setminus V)\cap[k]|
+
|(V\setminus U)\cap[k]|
= 2r - 1
\pmod 2
,\]
and therefore $|\Delta_k|\ge1$.
Consequently,
\[
Q(U,V) =
 \sum_{k=1}^n\Delta_k^2
 \ge
 \sum_{r=1}^m
 \sum_{k=d_{2r-1}}^{d_{2r}-1}1
=
 \sum_{r=1}^m\ell_r
= Q(U^\sharp,V^\sharp).
\]
Hence the even-sorted pair minimizes $Q$.

Moreover, equality
\[
Q(U,V)=Q(U^\sharp,V^\sharp)
\]
can hold if and only if 
\[
 |\Delta_k|=
 \begin{cases}
1, & d_{2r-1}\le k<d_{2r}
    \text{ for some }r,\\
0, & \text{otherwise}.
 \end{cases}
\]
In such a case,
there are signs $\sigma_r\in\{\pm1\}$
such that
\[
 \Delta_k=\sigma_r
 \qquad
(d_{2r-1}\le k<d_{2r}).
\]
Hence every \(Q\)-minimizer satisfies
\[
R(U,V) =
 \sum_{r=1}^m\sigma_r\ell_r.
\]
By the triangle inequality,
\[
|R(U,V)|
 \le
 \sum_{r=1}^m\ell_r
= |R(U^\sharp,V^\sharp)|.
\]
Since every \(\ell_r\) is positive, equality holds if and only if
\[
 \sigma_1=\sigma_2=\cdots=\sigma_m.
\]
The two possible common signs correspond precisely to interchanging
\(U^\sharp\) and \(V^\sharp\).
Among the $Q$-minimizers, $|R|$ is therefore maximal only for $\{U^\sharp,V^\sharp\}$. Hence $q_{U^\sharp}q_{V^\sharp}$ is the unique quadratic minimizer in Case~1.

\medskip

\noindent
{\bf Case 2} (\(m\not\equiv p\pmod2\)). 
Then
\begin{align*}
U^\sharp
&=I_s\cup\{d_{2r-1}:1\le r\le m\}\cup\{d_{2m}\},\\
V^\sharp
&=I_s\cup\{d_{2r}:1\le r\le m-1\}.
\end{align*}
Hence $\Delta_k^\sharp$
is given by
\[
 \Delta_k^\sharp=
 \begin{cases}
1, & d_{2r-1}\le k<d_{2r}
    \text{ for some }r,\\
0, & d_{2r}\le k<d_{2r+1}
    \text{ for some }r<m,\\
2, & d_{2m}\le k\le n.
 \end{cases}
\]
Let $\ell_r:=d_{2r}-d_{2r-1}>0$ and $\tau:=n-d_{2m}+1>0$. It follows that
\[
Q(U^\sharp,V^\sharp) =
 \sum_{r=1}^m\ell_r+4\tau,
 \qquad
R(U^\sharp,V^\sharp) =
 \sum_{r=1}^m\ell_r+2\tau.
\]

Let $q_Uq_V\in\mathcal{F}^{(2)}_s$.
From the same discussion in Case 1,
on every interval
$d_{2r-1}\le k<d_{2r}$,
we have $|\Delta_k|\ge1$.
There is an additional restriction after the last element
\(d_{2m}\).  Since both \(|U|\) and \(|V|\) are even,
\[
|U\setminus I_s|\equiv |V\setminus I_s|
 \equiv |I_s|
 \equiv p
 \pmod2.
\]
On the other hand,
\[
|U\setminus I_s|+|V\setminus I_s|=2m.
\]
Moreover,
\[
 \Delta_n
=
F_n(U) - F_n(V)
= |U|-|V| 
= |U \setminus I_s|-|V \setminus I_s| 
\equiv |U \setminus I_s|+|V \setminus I_s| 
=2m
\equiv0
\pmod 2.
\]
If \(\Delta_n=0\), then
\[
|U\setminus I_s|=m,
\]
which would imply
\[
m\equiv p\pmod2,
\]
contrary to the assumption of Case~2. 
Since $\Delta_n$ is even,
we have $|\Delta_n|\ge2$.
Since no element of the symmetric difference occurs after \(d_{2m}\), the value of \(\Delta_k\) is constant for \(k\ge d_{2m}\). Thus
\[
|\Delta_k|\ge2
 \qquad(d_{2m}\le k\le n).
\]
Consequently,
\[
Q(U,V) =
 \sum_{k=1}^n\Delta_k^2
\ge
 \sum_{r=1}^m
 \sum_{k=d_{2r-1}}^{d_{2r}-1}1
+
 \sum_{k=d_{2m}}^n4
=
 \sum_{r=1}^m\ell_r+4\tau
= Q(U^\sharp,V^\sharp).
\]
Hence the even-sorted pair minimizes $Q$.

Suppose now that equality 
\[
Q(U,V)=Q(U^\sharp,V^\sharp)
\]
holds.
Then 
\[
 |\Delta_k|=
 \begin{cases}
1, & d_{2r-1}\le k<d_{2r}
    \text{ for some }r,\\
0, & d_{2r}\le k<d_{2r+1}
    \text{ for some }r<m,\\
2, & d_{2m}\le k\le n.
 \end{cases}
\]
In such a case,
there are signs $\sigma_r\in\{\pm1\}$
such that
\[
 \Delta_k=\sigma_r
 \qquad
(d_{2r-1}\le k<d_{2r}).
\]
In particular,
$$
 \Delta_{d_{2m}-1}=\sigma_m.
$$
Since exactly one of \(U\) and \(V\) contains \(d_{2m}\), 
we have
$$
 \Delta_{d_{2m}}-\Delta_{d_{2m}-1}\in\{1,-1\}.
$$
On the other hand, equality in the lower bound for \(Q\) requires
$$
 |\Delta_{d_{2m}}|=2.
$$
Since \(\Delta_{d_{2m}-1}=\sigma_m\in\{1,-1\}\), the only possibility is
$$
 \Delta_{d_{2m}}=2\sigma_m.
$$
Finally, since \(d_{2m}\) is the largest element of \(U\triangle V\), \(\Delta_k\) does not change for \(k\ge d_{2m}\). Hence
$$
 \Delta_k=2\sigma_m
 \qquad
 (d_{2m}\le k\le n).
$$
Therefore every \(Q\)-minimizer satisfies
\[
R(U,V) =
 \sum_{r=1}^m\sigma_r\ell_r
+ 2\sigma_m\tau.
\]
By the triangle inequality,
\[
|R(U,V)|
 \le
 \sum_{r=1}^m\ell_r+2\tau
= |R(U^\sharp,V^\sharp)|.
\]
Since all \(\ell_r\) and \(\tau\) are positive, equality holds if and only if
\[
 \sigma_1=\sigma_2=\cdots=\sigma_m.
\]
The two possible common signs simply interchange
\(U^\sharp\) and \(V^\sharp\).
Hence the unique unordered pair minimizing $Q$ and then maximizing $|R|$ is
\[
 \left\{
 I_s\cup\{d_{2r-1}:1\le r\le m\}\cup\{d_{2m}\},
 \;
 I_s\cup\{d_{2r}:1\le r\le m-1\}
\right\}.
\]
This is the even-sorted pair, so $q_{U^\sharp}q_{V^\sharp}$ is the unique quadratic minimizer in Case~2.

\medskip

Therefore the assertion follows in both cases.
\end{proof}

\begin{proposition}\label{prop:Hcomparison}
On every fiber, the weights $\Omega$ and $H$ induce the same strict comparisons. In particular, in every quadratic fiber the even-sorted monomial is the unique minimizer of $H(U)+H(V)$.
\end{proposition}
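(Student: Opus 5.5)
The plan is to show that $\Omega$ and $2H$ differ by a function that is \emph{linear} in the characteristic vector $\chi_U$. Such a linear term contributes the same amount to every monomial in a fiber, because its total over a monomial $m$ depends only on $\rho(m)$. Once this is established, the strict comparisons induced by $\Omega$ and by $H$ on any fiber coincide. The second assertion then follows immediately from Lemma~\ref{lem:quadraticminimum}.

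First, expand $P(U)$ in terms of $x_i:=\chi_U(i)$, using $F_k(U)=\sum_{i\le k}x_i$ and $x_i^2=x_i$:
\[
P(U)=\sum_{k=1}^n\Bigl(\sum_{i\le k}x_i\Bigr)^2=\sum_{i=1}^n x_i\,\#\{k:k\ge i\}+2\sum_{i<j}x_ix_j\,\#\{k:k\ge j\}.
\]
Since $\#\{k\ge i\}=n-i+1=w_i$, this gives $P(U)=\sum_i w_ix_i+2\sum_{i<j}w_jx_ix_j$.

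Second, compute $L(U)=\sum_k\sum_{i\le k}x_i=\sum_i w_ix_i$. Squaring and again using $x_i^2=x_i$ gives $L(U)^2=\sum_i w_i^2x_i+2\sum_{i<j}w_iw_jx_ix_j$.

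Combining the two expansions,
\[
\Omega(U)=\sum_{i=1}^n (Mw_i-w_i^2)\,x_i+2\sum_{i<j}w_j(M-w_i)\,x_ix_j=\lambda(\chi_U)+2H(U),
\]
where $\lambda(x):=\sum_i(Mw_i-w_i^2)x_i$ is a linear functional and we have used \eqref{eq:cij}. Hence, for a monomial $m=q_{A_1}\cdots q_{A_d}$,
\[
\wt_\Omega(m)=\lambda(\rho(m))+2\,\wt_H(m).
\]
On a fiber $\mathcal F^{(d)}_u$ the term $\lambda(u)$ is constant. Therefore $\wt_\Omega(m)<\wt_\Omega(m')$ if and only if $\wt_H(m)<\wt_H(m')$ for all $m,m'$ in the fiber. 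Applying this to quadratic fibers together with Lemma~\ref{lem:quadraticminimum} gives the uniqueness of the even-sorted minimizer for $H$.

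The only step requiring care is the bookkeeping in the expansion of $P(U)$, namely counting the $k$ with $k\ge\max(i,j)$. Beyond that I expect no real obstacle.
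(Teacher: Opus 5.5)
Your proposal is correct and follows the paper's proof essentially verbatim: you expand $P(U)$ and $L(U)^2$ using $\chi_U(i)^2=\chi_U(i)$ to obtain $\Omega(U)=\lambda(U)+2H(U)$ with $\lambda$ linear in $\chi_U$. Because $\lambda$ is linear, its total over a monomial is constant on each fiber, so the strict comparisons induced by $\Omega$ and $H$ agree there. The quadratic statement then follows from Lemma~\ref{lem:quadraticminimum}.
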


\begin{proof}
Note that, for $U\in\En$, we have
\[
F_k(U)= |U \cap [k]|=\sum_{i=1}^k \chi_U(i).
\]
Let $w_i=n-i+1$. 
Since \(\chi_U(i)^2=\chi_U(i)\), we have the following.
\[
 \begin{aligned}
 L(U)&=\sum_{k=1}^n \sum_{i=1}^k  \chi_U(i)=\sum_{i=1}^n w_i\chi_U(i),\\
L(U)^2
&=\left(\sum_{i=1}^n w_i\chi_U(i)\right)^2
=\sum_{i=1}^n w_i^2\chi_U(i)
 +2\sum_{1\le i<j\le n}
 w_iw_j\chi_U(i)\chi_U(j),\\
P(U)&=\sum_{k=1}^n \left(\sum_{i=1}^k  \chi_U(i)\right)^2 
=\sum_{k=1}^n \left(
\sum_{i=1}^k \chi_U(i)+2\sum_{1\le i<j \le k}\chi_U(i)\chi_U(j)
\right)\\
&=\sum_{i=1}^n w_i\chi_U(i)+2\sum_{1\le i<j \le n}w_j\chi_U(i)\chi_U(j).
 \end{aligned}
\]
Thus
$$
\begin{aligned}
\Omega(U)
&=M \cdot P(U)-L(U)^2\\
&=\sum_{i=1}^n w_i(M-w_i)\chi_U(i) +2\sum_{1\le i<j\le n}
 w_j(M-w_i)\chi_U(i)\chi_U(j).
\end{aligned}
$$
Since \(c_{ij}=w_j(M-w_i)\), we obtain
\begin{equation}\label{eq:OmegaHdecomp}
\Omega(U)=\lambda(U)+2H(U),
\end{equation}
where
\[\lambda(U):=\sum_{i=1}^n \beta_i \chi_U(i)
\mbox{ with } \beta_i:=  w_i(M-w_i).\]

Let $q_{U_1}\cdots q_{U_d}$ and $q_{V_1}\cdots q_{V_d}$ lie in the same degree-$d$ fiber of $\varphi_n$. 
Then 
\[\sum_{a=1}^d\chi_{U_a}(i)=\sum_{a=1}^d\chi_{V_a}(i)\]
for $1\le i\le n$.
Hence
\[
\sum_{a=1}^d\lambda(U_a)
= \sum_{a=1}^d \sum_{i=1}^n \beta_i \chi_{U_a}(i)
=  \sum_{i=1}^n \beta_i \sum_{a=1}^d \chi_{U_a}(i)
=  \sum_{i=1}^n \beta_i \sum_{a=1}^d \chi_{V_a}(i)
= \sum_{a=1}^d \sum_{i=1}^n \beta_i\chi_{V_a}(i)
=\sum_{a=1}^d\lambda(V_a).
\]
Summing \eqref{eq:OmegaHdecomp} over the factors therefore gives
\[
 \sum_{a=1}^d\Omega(U_a)-\sum_{a=1}^d\Omega(V_a)
=2\left(\sum_{a=1}^d H(U_a)-\sum_{a=1}^d H(V_a)\right).
\]
Thus $\Omega$ and $H$ induce the same strict comparisons on every degree-$d$ fiber. The last assertion follows from Lemma~\ref{lem:quadraticminimum}.
\end{proof}

\begin{definition} \label{def:order}
Give the variable $q_A$ the nonnegative integer weight $H(A)$, and write $\wt_H(m)$ for the total $H$-weight of a monomial $m$. Fix any monomial order $\prec_0$ on $R_n$. Define $\prec$ by
\begin{equation}\label{mono_order}
 m\prec m'
 \quad\Longleftrightarrow\quad
 \begin{cases}
 \wt_H(m)<\wt_H(m'),&\text{or}\\
 \wt_H(m)=\wt_H(m')\text{ and }m\prec_0m'.
 \end{cases}    
\end{equation}
Thus a monomial of larger $H$-weight is larger with respect to $\prec$.
\end{definition}

This is a monomial order since the $H$-weight is a nonnegative integer-valued additive function, and ties are refined by the monomial order $\prec_0$.
See, e.g., \cite[Chapters 1 and 3]{Sturmfels1996}.
For every non-even-sorted quadratic monomial $q_Aq_B$, define
\begin{equation}\label{eq:gAB}
g_{A,B}=q_Aq_B-q_{A^\sharp}q_{B^\sharp},
 \qquad
 \{A^\sharp,B^\sharp\}=\esort(A,B).
\end{equation}
By Proposition~\ref{prop:Hcomparison}, the initial monomial is 
\begin{equation}\label{eq:LM}
 \inop_\prec(g_{A,B})=q_Aq_B.
\end{equation}

\section{Uniqueness of the cubic normal form}\label{sec:cubic}

A cubic monomial $q_Xq_Yq_Z$ is called \emph{pairwise even-sorted} if each of $\{X,Y\}$, $\{X,Z\}$, and $\{Y,Z\}$ is even-sorted.

\begin{theorem} \label{thm:cubicunique}
Every cubic fiber contains exactly one pairwise even-sorted monomial.
\end{theorem}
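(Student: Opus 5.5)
The plan has two parts: existence by a weight-descent argument that uses the quadratic results of Section~3, and uniqueness by a direct combinatorial reconstruction of a pairwise even-sorted triple from the fiber label $u$.

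\emph{Existence.} Fix a nonempty cubic fiber $\mathcal F_u^{(3)}$. It is finite, so it contains a monomial $q_Xq_Yq_Z$ of minimum $H$-weight. Suppose some pair, say $\{X,Y\}$, is not even-sorted. Replace it by $\esort(X,Y)$. By Lemma~\ref{lem:sortingfiber} the new monomial stays in $\mathcal F_u^{(3)}$. By Proposition~\ref{prop:Hcomparison}, $H(X^\sharp)+H(Y^\sharp)<H(X)+H(Y)$, so the total $H$-weight strictly decreases. This contradicts minimality, so every minimum-weight monomial is pairwise even-sorted.

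\emph{Uniqueness: setup.} Let $q_Xq_Yq_Z$ be pairwise even-sorted with $\chi_X+\chi_Y+\chi_Z=u$. By Lemma~\ref{lem:pairshape}, the prefix-count paths of any two of $X,Y,Z$ are pointwise comparable. Comparability is transitive, so after relabelling
\[
F_k(X)\ge F_k(Y)\ge F_k(Z)\qquad(1\le k\le n).
\]
Applying Lemma~\ref{lem:pairshape} to each of the three pairs, including $\{X,Z\}$, bounds all the differences by $2$. In particular $|X|-|Z|\in\{0,2\}$. Since $|X|+|Y|+|Z|=\sum_i u_i$ is fixed and all three sizes are even, the size vector $(|X|,|Y|,|Z|)$ is forced to be one of
\[
(a,a,a),\qquad (a+2,a,a),\qquad (a+2,a+2,a),
\]
and which one occurs is determined by $\sum_i u_i \bmod 6$. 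This also fixes, for each pair, whether it falls under case (i) or case (ii) of Lemma~\ref{lem:pairshape}.

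\emph{Uniqueness: reconstruction.} Scan the coordinates $i=1,\dots,n$ and track the state $(\delta^{XY}_k,\delta^{YZ}_k)$. Coordinates with $u_i\in\{0,3\}$ are forced. When $u_i=1$, the element $i$ lies in exactly one set. When $u_i=2$, it lies in exactly two sets, equivalently it is missing from exactly one, and the missing set is in the symmetric difference of each pair containing it. I will show that the allowed choice is forced at each step, using three constraints together:
\begin{itemize}
\item the ordering $F(X)\ge F(Y)\ge F(Z)$;
\item the bounds $\delta\in\{0,1\}$ in case (i), or $\delta\in\{0,1,2\}$ with $2$ absorbing in case (ii);
\item the required terminal values of $\delta^{XY}_n,\delta^{YZ}_n,\delta^{XZ}_n$, fixed by the size vector.
\end{itemize}
For example, if $u_i=1$ and $\delta^{XY}_{i-1}=\delta^{YZ}_{i-1}=0$, the element must go to $X$. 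If $\delta^{XY}_{i-1}=1$ and $\delta^{YZ}_{i-1}=0$, it must go to $Y$, unless a jump of $\delta^{XY}$ to the value $2$ is required. In that case the jump must happen exactly at the last element of the pair's symmetric difference, and I will locate that element using only $u$: it is the last $k$ at which the relevant pair's symmetric difference, computed from $u$ given the current state, changes.

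\emph{Main obstacle.} The hard step is the case analysis for this "last element" shift in case (ii). A jump to $2$ is irreversible and must happen at exactly one position. In the size patterns $(a+2,a,a)$ and $(a+2,a+2,a)$, two pairs are simultaneously of type (ii), and their final shifts must be coordinated. My first attempt is to prove that in each of these patterns the common shift position is the largest $i$ with $u_i\in\{1,2\}$. Before that index, the scan is the plain alternating sort, where all choices are forced by the $\{0,1\}$ bound. If this holds, each pattern reduces to checking a handful of state transitions, which gives uniqueness.
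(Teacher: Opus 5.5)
\textbf{There is a genuine gap in the uniqueness part.} Your existence argument and your reduction to the three size patterns agree with the paper. The problem is that uniqueness, which is the real content of the theorem, rests on a claim that is false.

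\textbf{The forward scan is not locally forced.} Write $s_k=(\delta^{XY}_k,\delta^{YZ}_k)$ and take the pattern $(a+2,a,a)$. Each of the steps $(s_{i-1},u_i)=(10,1)$, $(01,1)$, $(10,2)$ has two admissible successors: $20$ or $01$; $11$ or $00$; $00$ or $11$. So some lookahead is needed.

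\textbf{The lookahead you propose fails.} Two type-(ii) pairs need not shift at a common position, and a difference equal to $2$ can appear before the largest index $i^*$ with $u_i\in\{1,2\}$.
\begin{itemize}
\item Take $n=4$, $X=\{1,2,3,4\}$, $Y=\{1,3\}$, $Z=\{1,4\}$. Definition~\ref{def:evensort} shows all three pairs are even-sorted. Here $u=(3,1,2,2)$ and $i^*=4$. The last element of $X\triangle Y$ is $4$, but the last element of $X\triangle Z=\{2,3\}$ is $3$, so already $F_3(X)-F_3(Z)=2$.
\item In the pattern $(a+2,a+2,a)$, take $n=3$, $X=\{1,2\}$, $Y=\{1,3\}$, $Z=\emptyset$, so $u=(2,1,1)$. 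This triple is pairwise even-sorted, with the $(Y,Z)$ shift at $3$ and the $(X,Z)$ shift at $2$.
\end{itemize}
The mechanism is that the state $11$ has only one exit. In the first pattern it can only go to $20$ via $u_i=2$; in the second it can only go to $02$ via $u_i=1$. Hence when $u_{i^*}=2$ in the first pattern (resp.\ $u_{i^*}=1$ in the second), the path must sit in $11$ just before $i^*$. One of the two jumps then happens at the second-to-last index with $u_i\in\{1,2\}$, not at $i^*$. Your remark about locating the jump ``from $u$ given the current state'' does not supply this.

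\textbf{The missing idea, which is what the paper does, is to scan from right to left.} The terminal state $s_n$ ($20$, resp.\ $02$) is known from the size pattern. For every $s_k$ and every $r_k=u_k\in\{1,2\}$, at most one of the three a priori predecessors survives. The others either leave the state set, or, in a few table entries, are excluded by the absorbing part of Lemma~\ref{lem:pairshape}(ii), since a difference equal to $2$ would have to drop later. So the reverse dynamics is a function of $(s_k,r_k)$, and no shift position ever has to be located.

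A forward repair is also possible. You would show that the state stays in $\{00,10,01\}$, where every step is forced, until the last one or two indices with $u_i\in\{1,2\}$. You would then handle the terminal segment separately according to $u_{i^*}$. That is exactly the analysis your proposal leaves undone, and the version it sketches is incorrect.
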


\begin{proof}
We first prove existence. Let $q_Aq_Bq_C$ be any monomial in a fixed cubic fiber. If, say, $\{A,B\}$ is not even-sorted, replace $q_Aq_B$ by the even-sorted monomial $q_{A^\sharp}q_{B^\sharp}$. By Lemma~\ref{lem:sortingfiber}, this replacement does not change the cubic fiber. By Proposition~\ref{prop:Hcomparison}, it strictly decreases the total $H$-weight. Since a cubic fiber is finite, this procedure terminates. At termination all three quadratic divisors are even-sorted, so every cubic fiber contains a pairwise even-sorted monomial.

We prove uniqueness by showing that a pairwise even-sorted monomial is completely determined by its cubic fiber. Let $q_Xq_Yq_Z$ be pairwise even-sorted. By Lemma~\ref{lem:pairshape}, the prefix-count paths of every pair are comparable under pointwise dominance. Since pointwise dominance is transitive, the three paths can be simultaneously ordered. Hence, after relabeling, we may assume
\[
 F_k(X)\ge F_k(Y)\ge F_k(Z)
 \qquad(1\le k\le n).
\]
Let $ X_k:=F_k(X)$, $Y_k:=F_k(Y)$, and $Z_k:=F_k(Z)$.
Since the cardinalities are even and any two members of an even-sorted pair have cardinalities differing by $0$ or $2$, the ordered triple $(|X|,|Y|,|Z|)$ is one of
\[
 (2d,2d,2d),\qquad (2d+2,2d,2d),\qquad (2d+2,2d+2,2d)
\]
for some $d\ge0$. 
The three sums are $6d$, $6d+2$, and $6d+4$,
respectively. 
Since the cubic fiber fixes $|X|+|Y|+|Z|$, it therefore determines which of the three cases occurs.

For each $k$, let
$x_k=\chi_X(k)$, $y_k=\chi_Y(k)$, $z_k=\chi_Z(k)$, 
and let
$$
r_k=x_k + y_k + z_k \in\{0,1,2,3\},
\qquad
T_k=X_k+Y_k+Z_k=\sum_{i=1}^k r_i.
$$
Since the cubic fiber determines each $r_k$, it also determines each $T_k$.
We show in each of the three cases for $(|X|,|Y|,|Z|)$ that these data uniquely determine $X_k$, $Y_k$, and $Z_k$ for every $k$.

\smallskip
\noindent
{\bf Case 1.} $(|X|,|Y|,|Z|)=(2d,2d,2d)$.
Since \(X\) and \(Z\) have the same cardinality, Lemma~\ref{lem:pairshape}(i) gives
$$
0\le X_k-Z_k\le1
\qquad(1\le k\le n).
$$
Together with $X_k\ge Y_k\ge Z_k$, this shows that $(X_k,Y_k,Z_k)$ is an ordered triple of integers with sum $T_k$ whose largest and smallest entries differ by at most $1$.
Writing $T_k=3u_k+v_k$ with
$u_k \in \mathbb{Z}$ and $v_k\in\{0,1,2\}$, we have
\[
 (X_k,Y_k,Z_k)=
 \begin{cases}
 (u_k,u_k,u_k),&v_k=0,\\
 (u_k+1,u_k,u_k),&v_k=1,\\
 (u_k+1,u_k+1,u_k),&v_k=2.
 \end{cases}
\]
Thus all three prefix paths are uniquely determined by the fiber.

\smallskip

\noindent
{\bf Case 2.}
$(|X|,|Y|,|Z|)=(2d+2,2d,2d)$.
Let $a_k=X_k-Y_k$, $b_k=Y_k-Z_k$, and $s_k=(a_k,b_k)$.
Since
$$
|Y|-|Z|=0,\qquad |X|-|Y|=2,\qquad |X|-|Z|=2,
$$
Lemma~\ref{lem:pairshape} gives
$$
b_k\in\{0,1\},\qquad
a_k\in\{0,1,2\},\qquad
a_k+b_k=X_k-Z_k\in\{0,1,2\}.
$$
Therefore
\[
 s_k\in\{00,10,01,11,20\}.
\]
Here, we abbreviate a state $(a,b)$ as $ab$.
Moreover, the terminal part of Lemma~\ref{lem:pairshape}(ii), applied to $(X,Y)$ and $(X,Z)$, gives
\begin{align} \label{T_rule 1}
     a_k=2\Longrightarrow a_\ell=2\ (\ell\ge k),
 \qquad
 a_k+b_k=2\Longrightarrow a_\ell+b_\ell=2\ (\ell\ge k).
\end{align}
Since $|X|-|Y|=2$ and $|Y|-|Z|=0$, the final state is $s_n=(2,0)=20$. 
We now recover the states from right to left.
It follows that
\begin{equation}\label{eq:gaptrans}
\begin{aligned}
 a_k-a_{k-1}&= (X_k - X_{k-1}) - (Y_k - Y_{k-1}) = x_k-y_k,\\
 b_k-b_{k-1}&= (Y_k - Y_{k-1}) - (Z_k - Z_{k-1})=y_k-z_k.
\end{aligned}
\end{equation}
Recall that $r_k=x_k+y_k+z_k$.
\begin{itemize}
\item
If $r_k=0$ or $3$, then $(x_k,y_k,z_k)$ is $(0,0,0)$ or $(1,1,1)$, respectively, so $s_{k-1}=s_k$ by \eqref{eq:gaptrans}. 

\item 

If \(r_k=1\), then \(k\) belongs to exactly one of \(X,Y,Z\).  
By \eqref{eq:gaptrans}, the three possible $s_{k-1} = (a_{k-1},b_{k-1})$ are
\begin{equation} \label{eq:pred1}
\begin{array}{c|ccc}
 & k\in X & k\in Y & k\in Z\\
\hline
s_{k-1}
&
(a_k-1,b_k)
&
(a_k+1,b_k-1)
&
(a_k,b_k+1)
\end{array}
\end{equation}
for \(s_k=(a_k,b_k)\).  Substituting the five possible values of
\(s_k\) gives
$$
\begin{array}{c|ccc}
s_k
& k\in X
& k\in Y
& k\in Z\\
\hline
00 & (-1,0) & (1,-1) & 01\\
10 & 00 & (2,-1) & \underline{11}\\
01 & (-1,1) & 10 & (0,2)\\
11 & 01 & \underline{20} & (1,2)\\
20 & 10 & (3,-1) & (2,1)
\end{array}
$$
Here entries not belonging to the state set
$
\{00,10,01,11,20\}
$
are impossible.  Two further candidates, namely \(\underline{11}\) in the second
row and \(\underline{20}\) in the fourth row, belong to the state set but are
excluded by \eqref{T_rule 1}.  
Indeed, if \(s_{k-1}=11\), then
\(a_{k-1}+b_{k-1}=2\), so \eqref{T_rule 1} would imply
\(a_k+b_k=2\), contradicting \(s_k=10\).  
Similarly, if
\(s_{k-1}=20\), then \(a_{k-1}=2\), so \eqref{T_rule 1} would
imply \(a_k=2\), contradicting \(s_k=11\).

Thus $s_{k-1}$ is uniquely determined as follows:
$$
\begin{array}{c|ccccc}
s_k&00&10&01&11&20\\
\hline
s_{k-1}&01&00&10&01&10
\end{array}
$$

\item 
If \(r_k=2\), then \(k\) belongs to exactly two of \(X,Y,Z\).  The three possibilities are
$$
k\in X\cap Y,\qquad k\in X\cap Z,\qquad k\in Y\cap Z.
$$
By \eqref{eq:gaptrans}, the three possible $s_{k-1} = (a_{k-1},b_{k-1})$ are
\begin{equation} \label{eq:pred2}
\begin{array}{c|ccc}
 & k\in X\cap Y & k\in X\cap Z & k\in Y\cap Z\\
\hline
s_{k-1}
&
(a_k,b_k-1)
&
(a_k-1,b_k+1)
&
(a_k+1,b_k)
\end{array}
\end{equation}
for \(s_k=(a_k,b_k)\).  Substituting the five possible values of
\(s_k\) gives
$$
\begin{array}{c|ccc}
s_k
& k\in X\cap Y
& k\in X\cap Z
& k\in Y\cap Z\\
\hline
00 & (0,-1) & (-1,1) & 10\\
10 & (1,-1) & 01 & \underline{20}\\
01 & 00 & (-1,2) & \underline{11}\\
11 & 10 & (0,2) & (2,1)\\
20 & (2,-1) & 11 & (3,0)
\end{array}
$$
Entries not belonging to the state set
$
\{00,10,01,11,20\}
$
are impossible.  There are two additional candidates that belong to
the state set but are excluded by \eqref{T_rule 1}.  If \(s_k=10\)
and \(s_{k-1}=20\), then \(a_{k-1}=2\), so \eqref{T_rule 1}
would imply \(a_k=2\), a contradiction.  If \(s_k=01\) and
\(s_{k-1}=11\), then \(a_{k-1}+b_{k-1}=2\), so \eqref{T_rule 1} would imply \(a_k+b_k=2\), again a contradiction.
Thus $s_{k-1}$ is uniquely determined as follows:
$$
\begin{array}{c|ccccc}
s_k&00&10&01&11&20\\
\hline
s_{k-1}&10&01&00&10&11
\end{array}
$$
\end{itemize}

Thus we obtain the reverse transition table for $s_{k-1}$.
\begin{center}
\begin{tabular}{c|ccc}
\toprule
$s_k$ & $r_k=0\text{ or }3$ & $r_k=1$ & $r_k=2$\\
\midrule
$00$ & $00$ & $01$ & $10$\\
$10$ & $10$ & $00$ & $01$\\
$01$ & $01$ & $10$ & $00$\\
$11$ & $11$ & $01$ & $10$\\
$20$ & $20$ & $10$ & $11$\\
\bottomrule
\end{tabular}
\end{center}

Starting from the known final state $s_n=20$ and using the fiber data $r_n,r_{n-1},\ldots,r_1$, the table determines successively $s_{n-1},s_{n-2},\ldots,s_0$. Thus every $a_k$ and $b_k$ is determined. Since $T_k$ is also fixed by the fiber, the three prefix counts are recovered from
\begin{equation}\label{eq:recover}
 Y_k=\frac{T_k-a_k+b_k}{3},\qquad
 X_k=Y_k+a_k,\qquad
 Z_k=Y_k-b_k.
\end{equation}
Finally, with $X_0=Y_0=Z_0=0$, the successive differences $\chi_X(k)=X_k-X_{k-1}$, $\chi_Y(k)=Y_k-Y_{k-1}$, and $\chi_Z(k)=Z_k-Z_{k-1}$ recover the subsets $X,Y,Z$ uniquely.

\smallskip
\noindent
{\bf Case 3.} $(|X|,|Y|,|Z|)=(2d+2,2d+2,2d)$.
We use the same notation $a_k=X_k-Y_k$, $b_k=Y_k-Z_k$, and $s_k=(a_k,b_k)$.
Since
$$
 |X|-|Y|=0,\qquad |Y|-|Z|=2,\qquad |X|-|Z|=2,
$$
Lemma~\ref{lem:pairshape} gives
\[
 a_k\in\{0,1\},\qquad b_k\in\{0,1,2\},\qquad a_k+b_k = X_k -Z_k \in\{0,1,2\},
\]
and hence
\[
 s_k\in\{00,10,01,11,02\}.
\]
The terminal part of Lemma~\ref{lem:pairshape}(ii), applied to $(Y,Z)$ and $(X,Z)$, gives
\begin{align} \label{T_rule 2}
 b_k=2\Longrightarrow b_\ell=2\ (\ell\ge k),
 \qquad
 a_k+b_k=2\Longrightarrow a_\ell+b_\ell=2\ (\ell\ge k),
    \end{align}
and the final state is $s_n=02$.
The formulas \eqref{eq:pred1} and \eqref{eq:pred2} remain valid, since they depend only on \eqref{eq:gaptrans}. 
\begin{itemize}
\item 
As in Case 2, if $r_k=0$ or $3$, then $(x_k,y_k,z_k)$ is $(0,0,0)$ or $(1,1,1)$, respectively, so $s_{k-1}=s_k$. 
    \item 
If \(r_k=1\), then \(k\) belongs to exactly one of \(X,Y,Z\).  As in Case~2, the three possible $s_{k-1}=(a_{k-1},b_{k-1})$ are
$$
\begin{array}{c|ccc}
 & k\in X & k\in Y & k\in Z\\
\hline
s_{k-1}
&
(a_k-1,b_k)
&
(a_k+1,b_k-1)
&
(a_k,b_k+1)
\end{array}
$$
for \(s_k=(a_k,b_k)\).  Substituting the five possible values of \(s_k\) gives
$$
\begin{array}{c|ccc}
s_k
& k\in X
& k\in Y
& k\in Z\\
\hline
00 & (-1,0) & (1,-1) & 01\\
10 & 00 & (2,-1) & \underline{11}\\
01 & (-1,1) & 10 & \underline{02}\\
11 & 01 & (2,0) & (1,2)\\
02 & (-1,2) & 11 & (0,3)
\end{array}
$$
Entries not belonging to the state set
$
\{00,10,01,11,02\}
$
are impossible.
In addition, if \(s_k=10\) and \(s_{k-1}=11\), then
$
a_{k-1}+b_{k-1}=2,
$
so \eqref{T_rule 2} would imply \(a_k+b_k=2\), contrary to \(s_k=10\).  Similarly, if \(s_k=01\) and \(s_{k-1}=02\), then \(b_{k-1}=2\), so \eqref{T_rule 2} would imply \(b_k=2\), contrary to \(s_k=01\).
Thus $s_{k-1}$ is uniquely determined as follows:
$$
\begin{array}{c|ccccc}
s_k&00&10&01&11&02\\
\hline
s_{k-1}&01&00&10&01&11
\end{array}
$$

    \item 
    If \(r_k=2\), then \(k\) belongs to exactly two of \(X,Y,Z\).  The three possibilities give
$$
\begin{array}{c|ccc}
 & k\in X\cap Y & k\in X\cap Z & k\in Y\cap Z\\
\hline
s_{k-1}
&
(a_k,b_k-1)
&
(a_k-1,b_k+1)
&
(a_k+1,b_k)
\end{array}
$$
for \(s_k=(a_k,b_k)\).  Hence
$$
\begin{array}{c|ccc}
s_k
& k\in X\cap Y
& k\in X\cap Z
& k\in Y\cap Z\\
\hline
00 & (0,-1) & (-1,1) & 10\\
10 & (1,-1) & 01 & (2,0)\\
01 & 00 & (-1,2) & \underline{11}\\
11 & 10 & \underline{02} & (2,1)\\
02 & 01 & (-1,3) & (1,2)
\end{array}
$$
Again, entries outside
$
\{00,10,01,11,02\}
$
are impossible.  
If \(s_k=01\) and \(s_{k-1}=11\), then
$
a_{k-1}+b_{k-1}=2,
$
so \eqref{T_rule 2} would force \(a_k+b_k=2\), contrary to \(s_k=01\).  If \(s_k=11\) and \(s_{k-1}=02\), then \(b_{k-1}=2\), so \eqref{T_rule 2} would force \(b_k=2\), contrary to \(s_k=11\).
Thus $s_{k-1}$ is uniquely determined as follows:
$$
\begin{array}{c|ccccc}
s_k&00&10&01&11&02\\
\hline
s_{k-1}&10&01&00&10&01
\end{array}
$$
\end{itemize}
Hence we obtain the reverse transition table for $s_{k-1}$.
\begin{center}
\begin{tabular}{c|ccc}
\toprule
$s_k$ & $r_k=0\text{ or }3$ & $r_k=1$ & $r_k=2$\\
\midrule
$00$ & $00$ & $01$ & $10$\\
$10$ & $10$ & $00$ & $01$\\
$01$ & $01$ & $10$ & $00$\\
$11$ & $11$ & $01$ & $10$\\
$02$ & $02$ & $11$ & $01$\\
\bottomrule
\end{tabular}
\end{center}
Thus, starting from $s_n=02$, the fiber data determine all states uniquely in reverse. Equation \eqref{eq:recover} then determines the three prefix paths, and their successive differences determine $X,Y,Z$.

\smallskip

In Cases 1--3, a pairwise even-sorted monomial in the given
cubic fiber is uniquely determined. Together with existence,
this proves the theorem.
\end{proof}

\section{The quadratic Gr\"obner basis}\label{sec:GB}

Let $G_n$ be the set of binomials in \eqref{eq:gAB}, one for each
non-even-sorted quadratic monomial; that is,
\[
G_n:=\{
q_Aq_B-q_{A^\sharp}q_{B^\sharp} : 
A, B \in \En, 
  \{A^\sharp,B^\sharp\}=\esort(A,B) \ne \{A,B\}
 \}.
\]
From Lemma~\ref{lem:sortingfiber}, we have
$G_n  \subset I_{C_n}$.

\begin{lemma}\label{lem:Ggenerates}
The set $G_n$ generates $I_{C_n}$.
\end{lemma}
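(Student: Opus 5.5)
The plan is to combine the known fact, recalled in the introduction, that $I_{C_n}$ is generated by homogeneous quadratic binomials with a reduction argument inside each quadratic fiber. Since $I_{C_n}$ is a toric ideal, it is spanned as a $\kk$-vector space by binomials $m-m'$ with $\varphi_n(m)=\varphi_n(m')$. Because $I_{C_n}$ is generated by its degree-two part, it suffices to show that every quadratic binomial $q_Aq_B-q_Cq_D\in I_{C_n}$ lies in the ideal $(G_n)$. As observed in Section~2, such a binomial has $\rho(q_Aq_B)=\rho(q_Cq_D)$, so both monomials lie in the same quadratic fiber $\mathcal F_u^{(2)}$.

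The key step is that every quadratic monomial is congruent modulo $(G_n)$ to the even-sorted monomial of its fiber. If $\{A,B\}$ is not even-sorted, then $q_Aq_B-q_{A^\sharp}q_{B^\sharp}\in G_n$ by definition, so $q_Aq_B\equiv q_{A^\sharp}q_{B^\sharp}$. If $\{A,B\}$ is already even-sorted, nothing needs to be done. By Lemma~\ref{lem:sortingfiber} and the proof of Lemma~\ref{lem:quadraticminimum}, the even-sorted pair depends only on the fiber. The reason is that $A\cap B=I_s$ and $A\triangle B=D_s$ are determined by $u$, and the definition of $\esort$ uses only these data. Hence $q_Aq_B$ and $q_Cq_D$ are both congruent to the same monomial $q_{A^\sharp}q_{B^\sharp}$, and therefore $q_Aq_B-q_Cq_D\in(G_n)$.

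It follows that $(G_n)$ contains a generating set of $I_{C_n}$. Together with $G_n\subset I_{C_n}$ from Lemma~\ref{lem:sortingfiber}, this gives equality.

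There is no serious obstacle in this argument. The only point needing care is the appeal to quadratic generation of $I_{C_n}$, which the introduction cites through Engstr\"om's theorem, since $C_n$ is $K_4$-minor-free. The case $n=3$, where $I_{C_3}=0$, should also be handled: there $G_3$ is empty, consistent with every quadratic fiber being a singleton. If one prefers not to rely on quadratic generation, the alternative would be to wait for the Buchberger argument of Section~\ref{sec:GB}. However, that argument only shows that $G_n$ is a Gröbner basis of the ideal it generates, so the cited generation result is genuinely needed here.
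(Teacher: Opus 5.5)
Your proposal is correct and follows the paper's proof essentially verbatim: you invoke Engstr\"om's quadratic generation for the $K_4$-minor-free graph $C_n$, observe that the even-sorted monomial $m_0$ depends only on the quadratic fiber, and write $m_1-m_2=(m_1-m_0)-(m_2-m_0)$ with each nonzero term in $G_n$. Your side remarks (the case $n=3$, and that the Buchberger argument alone would not give generation) are accurate.
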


\begin{proof}
Since $C_n$ is $K_4$-minor-free, $I_{C_n}$ is generated by
quadratic binomials by \cite[Corollary~2.8]{Engstrom2011}.
Thus it is enough to consider a quadratic binomial
$m_1-m_2\in I_{C_n}$.

Since $\rho(m_1)=\rho(m_2)$, the monomials $m_1$ and $m_2$
lie in the same quadratic fiber. Let $m_0$ be the unique
even-sorted monomial in this fiber. For $i=1,2$, if
$m_i\ne m_0$, then $m_i$ is non-even-sorted and
$m_i-m_0\in G_n$. Hence
\[
m_1-m_2=(m_1-m_0)-(m_2-m_0),
\]
where a term on the right is understood to be zero if
$m_i=m_0$. Thus $m_1-m_2$ belongs to the ideal generated by
$G_n$.
\end{proof}

\begin{theorem}\label{thm:GB}
The set $G_n$ is a quadratic Gr\"obner basis of $I_{C_n}$ with respect to the monomial order $\prec$ in \eqref{mono_order}.
\end{theorem}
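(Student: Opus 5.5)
We plan to prove Theorem~\ref{thm:GB} with Buchberger's criterion \cite[Theorem~1.29]{HerzogHibiOhsugi2018}. The ingredients are already in place: $G_n\subset I_{C_n}$ by Lemma~\ref{lem:sortingfiber}, $G_n$ generates $I_{C_n}$ by Lemma~\ref{lem:Ggenerates}, and the initial term of each $g_{A,B}$ is $q_Aq_B$ by \eqref{eq:LM}. So it remains to show that every $S$-polynomial $S(g,g')$ with $g,g'\in G_n$ reduces to zero modulo $G_n$.

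\textbf{Coprime leading terms.} If the initial monomials of $g$ and $g'$ are coprime, the standard criterion gives reduction to zero. So we may assume the initial monomials share a variable. Then their least common multiple is a cubic monomial $q_Aq_Bq_C$, where $q_Aq_B=\inop_\prec(g)$ and $q_Aq_C=\inop_\prec(g')$. The case $g=g'$ is trivial, so assume $g\neq g'$.

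\textbf{Reduction of the cubic $S$-polynomial.} Here $S(g,g')=q_{A^\sharp}q_{B^\sharp}q_C-q_{A'^\sharp}q_{C'^\sharp}q_B$, a difference of two cubic monomials. Both lie in the same cubic fiber as $q_Aq_Bq_C$, since each replacement preserves $\rho$ by Lemma~\ref{lem:sortingfiber}.

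A single monomial $m$ is reduced modulo $G_n$ as follows. Whenever some quadratic divisor of $m$ is not even-sorted, it is an initial term of an element of $G_n$, so we replace that divisor by its even-sorted pair. Each step
\begin{itemize}
\item stays in the same cubic fiber, and
\item strictly lowers $\wt_H$, by Proposition~\ref{prop:Hcomparison}.
\end{itemize}
The fiber is finite, so the process terminates. It stops exactly when $m$ is pairwise even-sorted, which is then a standard monomial with respect to $G_n$.

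By Theorem~\ref{thm:cubicunique}, the cubic fiber contains exactly one pairwise even-sorted monomial $m_0$. Hence both monomials of $S(g,g')$ reduce to the same $m_0$. Each reduction step subtracts a monomial multiple of an element of $G_n$ whose leading term is at most the current monomial. This gives a standard representation of $S(g,g')$ with remainder $m_0-m_0=0$.

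\textbf{Main difficulty.} The substantive combinatorial input is the cubic uniqueness, Theorem~\ref{thm:cubicunique}, which is already established. The only points needing care here are:
\begin{itemize}
\item the bookkeeping that a chain of single-monomial reductions yields a valid standard expression $S(g,g')=\sum h_i g_i$ with $\inop_\prec(h_ig_i)\preceq \inop_\prec(S(g,g'))$, which follows from strict weight decrease;
\item if the two monomials of $S(g,g')$ coincide, the $S$-polynomial is already zero.
\end{itemize}

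Finally, $G_n$ consists of quadratic binomials, so the Gr\"obner basis is quadratic.
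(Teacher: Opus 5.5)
Your proposal is correct and follows essentially the same route as the paper: Buchberger's criterion with the product criterion for coprime leading terms, and, for overlapping leading terms, reduction of both cubic monomials of the $S$-polynomial to the unique pairwise even-sorted monomial of their common cubic fiber (Theorem~\ref{thm:cubicunique}). As in the paper, termination comes from strict $H$-weight decrease (Proposition~\ref{prop:Hcomparison}), and generation of $I_{C_n}$ from Lemma~\ref{lem:Ggenerates}.
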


\begin{proof}
By \eqref{eq:LM}, the initial monomials of the elements of $G_n$ are
precisely the non-even-sorted quadratic monomials, and
distinct elements of $G_n$ have distinct initial monomials.
If $q_Aq_B$ is non-even-sorted, then $A\ne B$.
Hence every such monomial is squarefree.

We verify Buchberger's criterion \cite[Theorem~1.29]{HerzogHibiOhsugi2018}.  Let $f,g\in G_n$ be distinct.  If
$\inop_\prec(f)$ and $\inop_\prec(g)$ are relatively prime, then
$S(f,g)$ reduces to zero by the product criterion \cite[Lemma~1.27]{HerzogHibiOhsugi2018}.

Suppose that they are not relatively prime.  Since they are distinct
squarefree quadratic monomials, they have exactly one common variable.
Let
\[
 \inop_\prec(f)=q_Aq_B,\qquad
 \inop_\prec(g)=q_Aq_C,
\]
and let
\[
 f=q_Aq_B-m_f,\qquad
 g=q_Aq_C-m_g,
\]
where $m_f$ and $m_g$ are the corresponding even-sorted quadratic
monomials.  Then
\[
 S(f,g)=q_Cf-q_Bg
       =q_Bm_g-q_Cm_f.
\]
Since $f$ and $g$ belong to the toric ideal, the two cubic monomials
$q_Bm_g$ and $q_Cm_f$ have the same image under $\varphi_n$.
Therefore they lie in the same cubic fiber.

Every reduction by $G_n$ replaces a non-even-sorted quadratic factor
by its even-sorted factor.  By Proposition~\ref{prop:Hcomparison}, this strictly
decreases the total $H$-weight.  Hence every reduction sequence
starting from either $q_Bm_g$ or $q_Cm_f$ terminates.  A terminal
cubic monomial has no non-even-sorted quadratic divisor, and is
therefore pairwise even-sorted.
By \Cref{thm:cubicunique}, the cubic fiber under consideration
contains exactly one pairwise even-sorted monomial. 
Therefore, both monomials reduce to this same monomial, so $S(f,g)$ reduces to zero.

Thus every $S$-polynomial of elements of $G_n$ reduces to zero.  
Buchberger's
criterion shows that $G_n$ is a Gr\"obner basis of the ideal it generates.
Finally, Lemma~\ref{lem:Ggenerates} shows that this ideal is
$I_{C_n}$.
\end{proof}

\begin{proof}[Proof of \Cref{thm:main}]
The weight is given by \cref{eq:cij,eq:H}, and the assertion follows from \Cref{thm:GB}.
\end{proof}

\begin{proof}[Proof of Corollary~\ref{cor:ringgraph}]
The proof follows the argument of Nagel and Petrovi\'c
\cite[Theorem~6.2]{NagelPetrovic2009}.
A connected ring graph can be obtained from trees and cycles by repeated
clique sums along a vertex or an edge. For trees with at least two edges,
the required quadratic Gr\"obner basis is given by
\cite[Corollary~4.3]{NagelPetrovic2009}, while the one-edge case is trivial.
\Cref{thm:main} provides the required quadratic Gr\"obner bases for cycles.
The clique-sum theorem of Sturmfels and Sullivant
\cite[Theorem~2.1]{SturmfelsSullivant2008}
then yields a quadratic Gr\"obner basis at each step.
Hence $I_G$ has a quadratic Gr\"obner basis.
\end{proof}

\begin{remark}\label{rem:disconnected}
The connectedness hypothesis in Corollary~\ref{cor:ringgraph} concerns the standard cut-ideal presentation by variables indexed by vertex partitions. If $G$ is disconnected, distinct vertex partitions can define the same cut vector, and hence $I_G$ contains linear binomials. For this reason, Corollary~\ref{cor:ringgraph} is stated for connected ring graphs with at least one edge.
\end{remark}

The following is immediate.

\begin{corollary}\label{cor:koszul}
For every $n\ge3$, the cut algebra $R_n/I_{C_n}$ is Koszul.
\end{corollary}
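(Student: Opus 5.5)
This follows from \Cref{thm:GB} together with a standard implication: a standard graded algebra whose defining ideal has a quadratic Gr\"obner basis is Koszul. Our plan is to pass to the initial ideal. Let $J=\inop_\prec(I_{C_n})$. By \Cref{thm:GB}, $J$ is generated by the initial monomials of $G_n$. By \eqref{eq:LM}, these are exactly the non-even-sorted quadratic monomials $q_Aq_B$. Hence $J$ is generated by quadratic monomials, and as observed in the proof of \Cref{thm:GB}, these monomials are even squarefree.

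The first step is to show that $R_n/J$ is Koszul. This is the classical theorem of Fr\"oberg: a quotient of a polynomial ring by an ideal generated by quadratic monomials is Koszul. We will cite it rather than reprove it. If we wanted a self-contained route, we could use the squarefree property, which makes $R_n/J$ the Stanley--Reisner ring of a flag complex; alternatively, one can construct an explicit linear resolution of the residue field via a filtration argument.

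The second step is the deformation: $R_n/I_{C_n}$ is Koszul whenever $R_n/J$ is. The Gr\"obner degeneration gives a flat family with general fiber $R_n/I_{C_n}$ and special fiber $R_n/J$. By upper semicontinuity of the graded Betti numbers, $\beta^{R_n/I_{C_n}}_{i,j}(\kk)\le\beta^{R_n/J}_{i,j}(\kk)$, and the right-hand side vanishes for $j\ne i$. This is the standard statement that algebras with a quadratic Gr\"obner basis (so-called G-quadratic algebras) are Koszul; see for instance \cite{HerzogHibiOhsugi2018}. Combining the two steps proves the corollary for all $n\ge 3$.

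The only delicate point is the case $n=3$, where $I_{C_3}=0$ and $G_3$ is empty. There $R_3/I_{C_3}$ is a polynomial ring, which is trivially Koszul, so no real obstacle remains. All the substantive work was already done in \Cref{thm:cubicunique} and \Cref{thm:GB}.
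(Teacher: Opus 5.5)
Your proposal is correct and gives the argument the paper has in mind when it calls the corollary immediate. The quadratic Gr\"obner basis of Theorem~\ref{thm:GB} yields a quadratic monomial initial ideal, which is Koszul by Fr\"oberg's theorem, and Koszulness transfers back along the Gr\"obner degeneration by upper semicontinuity of the graded Betti numbers of $\kk$. Your separate treatment of $n=3$ is harmless but unnecessary, since the general argument already covers the zero ideal.
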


\end{document}